\renewcommand\thetable{\thesection.\@arabic\c@table}
\renewcommand\thefigure{\thesection.\@arabic\c@figure}
\renewcommand\Re{\operatorname*{Re}}	
\newcommand\res{\operatorname{res}}
\newcommand\ord{\operatorname{ord}}
\newcommand\vol{\operatorname{vol}}
\newcommand\Z{\mathbb{Z}}	\newcommand\C{\mathbb{C}}
\newcommand\Q{\mathbb{Q}}	\newcommand\R{\mathbb{R}}
\newcommand\Gal{\mathrm{Gal}}
\newcommand\spec{\operatorname{spec}}
\renewcommand\P{\mathbb{P}}
\newcommand\F{\mathbb{F}}
\newcommand\Fpa{\bar\F_p}
\newcommand\q{\mathbf{q}}
\newcommand\oo{\mathcal{O}}
\newcommand\Curve{\mathcal{C}}
\newcommand\Mellin{\mathcal{M}}
\newcommand\Tr{\operatorname{Tr}}
\newcommand\pr{\mathfrak{p}}
\newcommand\diff{\mathfrak{d}}
\newcommand\Fo{\mathcal{F}}
\newcommand\A{\mathbb{A}}
\newcommand\x{\mathfrak{x}}	\newcommand\y{\mathfrak{y}}
\newcommand\calD{\mathcal{D}}	\newcommand\calK{\mathcal{K}}
\newcommand\calP{\mathcal{P}}
\newcommand\Pic{\mathrm{Cl}}
\newcommand\gota{\mathfrak{a}}	
\newcommand\cid{\mathfrak{c}}
\newtheorem{theorem}{Theorem}[section]
\newtheorem{lemma}[theorem]{Lemma}
\theoremstyle{definition}\newtheorem{definition}[theorem]{Definition}
\theoremstyle{remark}\newtheorem{remark}[theorem]{Remark}
\newtheorem{example}[theorem]{Example}
\numberwithin{equation}{section}
\begin{document}

\title[The Riemann Hypothesis for Function Fields over a Finite Field]{The Riemann Hypothesis\\
for Function Fields over a Finite Field}
\author{Machiel van Frankenhuijsen}
\address{Department of Mathematics,
Utah Valley University,
Orem, UT 84058-5999}
\email{vanframa@uvsc.edu}

\subjclass[2000]{Primary 11G20; Secondary 11R58, 14G15, 30D35}

\begin{abstract}
The Riemann hypothesis,
formulated in 1859 by Bernhard Riemann,
states that the Riemann zeta function $\zeta(s)$ has all its nonreal zeros on the line $\Re s=1/2$.
Despite over a hundred years of considerable effort by numerous mathematicians,
this conjecture remains one of the most intriguing unsolved problems in mathematics.
On the other hand,
several analogues of the Riemann hypothesis have been formulated and proved.

In this chapter,
we discuss Enrico Bombieri's proof of the Riemann hypothesis for a curve over a finite field.
This problem was formulated as a conjecture by Emil Artin in his thesis of 1924.
Reformulated,
it states that the number of points on a curve $\Curve$ defined over the finite field with $q$ elements is of the order~$q+O(\sqrt{q})$.
The first proof was given by Andr\'e Weil in 1942.
This proof uses the intersection of divisors on $\Curve\times\Curve$,
making the application to the original Riemann hypothesis so far unsuccessful,
because $\spec\Z\times\spec\Z=\spec\Z$ is one-dimensional.
A new method of proof was found in 1969 by S.~A.~Stepanov.
This method was greatly simplified and generalized by Bombieri in 1973.

Bombieri's proof uses functions on $\Curve\times\Curve$,
again precluding a direct translation to a proof of the Riemann hypothesis itself.
However,
the two coordinates on $\Curve\times\Curve$ play different roles,
one coordinate playing the geometric role of the variable of a polynomial,
and the other coordinate the arithmetic role of the coefficients of this polynomial.
The Frobenius automorphism of $\Curve$ acts on the geometric coordinate of $\Curve\times\Curve$.
In the last section,
we make some suggestions how Nevanlinna theory could provide a model for $\spec\Z\times\spec\Z$ that is two-dimensional and carries an action of Frobenius on the geometric coordinate.

The plan of this chapter is as follows.
We first give a historical introduction to the Riemann hypothesis for a curve over a finite field and discuss some of the proofs that have been given.
In Section~\ref{S:zeta},
we define the zeta function of the curve.
To prove the functional equation,
we take a brief excursion to the two-variable zeta function of Pellikaan.
In Section~\ref{S:RH},
we reformulate the Riemann hypothesis for $\zeta_\Curve(s)$,
 and give Bombieri's proof.
In the last section,
we compare $\zeta_\Curve(s)$ with the Riemann zeta function $\zeta(s)$,
and describe the formalism of Nevanlinna theory that might be a candidate for the framework of a translation of Bombieri's proof to the original Riemann hypothesis.

Keywords: zeta function of a curve over a finite field, Riemann hypothesis for a curve over a finite field, Frobenius flow, Nevanlinna theory.
\end{abstract}
\maketitle

\section{Introduction}
\label{S: intro}

The Riemann zeta function is the function $\zeta(s)$,
defined for $\Re s>1$ by the infinite series
\begin{gather}\label{D:Rzeta}
\zeta(s)=1+\frac1{2^s}+\frac1{3^s}+\frac1{4^s}+\dots.
\end{gather}
This function has a meromorphic continuation to the complex plane,
with a simple pole at~$s=1$ with residue $1$.
One can complete this function with the factor `at infinity' (related to the archimedean valuation on the real numbers),
$$
\zeta_\R(s)=\pi^{-s/2}\Gamma(s/2),
$$
to define the function~$\zeta_\Z(s)=\zeta_\R(s)\zeta(s)$.
This function is meromorphic on~$\C$ with simple poles at $s=0$ and $1$.
It satisfies the functional equation
$$
\zeta_\Z(1-s)=\zeta_\Z(s).
$$
This is proved by Riemann~\cite{Riemann1859} using the `Riemann--Roch'\footnote
{This formula goes back to Cauchy and is called a Riemann--Roch formula in Tate's thesis.
The classical theta function is defined as $\smash{\vartheta(z,\tau)=\sum_{n=-\infty}^\infty e^{\pi in^2\tau}e^{2\pi iz}}$,
so that~\mbox{$\theta(t)=\vartheta(0,it^2)$}.}
 formula
\begin{gather}\label{E: theta flip}
\theta(t^{-1})=t\theta(t),
\end{gather}
where $\theta(t)=\sum_{n=-\infty}^\infty e^{-\pi n^2t^2}$
 is closely related to the classical theta-function.
For $\Re s>1$,
the zeta function satisfies the Euler product
\begin{gather}\label{E: Euler prod}
\zeta_\Z(s)=\zeta_\R(s)\prod_p\frac1{1-p^{-s}},
\end{gather}
where the product is taken over all prime numbers.
It follows that the zeros of~$\zeta_\Z$ all lie in the vertical strip $0\leq\Re s\leq1$.\footnote
{It is also known that the zeros do not lie on the boundary of this `critical strip',
see \cite[Theorem~19, p.~58]{Ingham} and~\cite[Theorem~2.4]{zeros}.}
The Riemann hypothesis states that these zeros all lie on the line $\Re s=1/2$:
$$
\text{Riemann hypothesis:}\quad\zeta_\Z(s)=0\text{ implies }\Re s=1/2.
$$
See~\cite{Riemann1859} and~\cite{Edwards,Haran,book,second,Patterson,RiemannT,Tate} for more information about the Riemann and other zeta functions.\medskip

In this exposition,
we prove the Riemann hypothesis for the zeta function of a curve over a finite field.
Let $q$ be a power of a prime number~$p$,
and let $m(T,X)$ be a polynomial in two variables with coefficients in $\F_q$,
the finite field with $q$ elements.
The equation
$$
\Curve\colon m(T,X)=0
$$
 defines a curve $\Curve$ over $\F_q$,
which we assume to be nonsingular.
Let~$N_\Curve(n)$ be the number of solutions of the equation $m(t,x)=0$ in the finite set~\mbox{$\F_{q^n}\times\F_{q^n}$}.
Thus $N_\Curve(n)$ is the number of points on $\Curve$ with coordinates in $\F_{q^n}$.
A famous theorem of F.\ K.~Schmidt of 1931 (see~\cite{fSchmidt},
\cite{Hasse34,Hassebook,Tretkoff},
and~\eqref{E:NCn} below) says that there exist an integer~$g$,
the genus of~$\Curve$,
and algebraic numbers $\omega_1,\dots,\omega_{2g}$,
such that\footnote
{In $N_\Curve(n)$,
also the finitely many points `at infinity' need to be counted.}
$$
N_\Curve(n)=q^n-\sum_{\nu=1}^{2g}\omega_\nu^n\ +1.
$$
From the formula for $N_\Curve(n)$,
 the Mellin transform (generating function),
$$
\Mellin N_\Curve(s)=\sum_{n=1}^\infty N_\Curve(n)q^{-ns},
$$
can be computed as a rational function of $q^{-s}$,
\begin{gather}\label{E:MNC}
\Mellin N_\Curve(s)+2-2g=\frac1{1-q^{1-s}}-\sum_{\nu=1}^{2g}\frac1{1-\omega_\nu q^{-s}}+\frac1{1-q^{-s}}.
\end{gather}
We define the {\em zeta function\/} of $\Curve$ by
$$
\zeta_\Curve(s)=q^{s(g-1)}
\frac{\prod_{\nu=1}^{2g}(1-\omega_\nu q^{-s})}{(1-q^{1-s})(1-q^{-s})},
$$
so that $\Mellin N_\Curve$ is recovered as its logarithmic derivative,
\begin{gather}\label{E:ld zetaC}
-\frac1{\log q}\frac{\zeta_\Curve'(s)}{\zeta_\Curve(s)}=\Mellin N_\Curve(s)+1-g.
\end{gather}
The function $\zeta_\Curve$ satisfies the functional equation $\zeta_\Curve(1-s)=\zeta_\Curve(s)$.
This functional equation can be proved using the Riemann--Roch formula
\begin{gather}\label{E:RR}
l(\calD)=\deg\calD+1-g+l(\calK-\calD),
\end{gather}
which is the analogue of~\eqref{E: theta flip} above.
We will take a different approach and prove the functional equation in Section~\ref{S:two-variable} using the two-variable zeta function of Pellikaan.\medskip

Clearly,
$\zeta_\Curve$ is a rational function in $q^{-s}$,
and hence periodic with period~$2\pi i/\log q$.
It has simple poles at $s=2k\pi i/\log q$ and $s=1+2k\pi i/\log q$,
and zeros at the points $s=\log_q\omega_\nu+2k\pi i/\log q$ ($\nu=1,\dots,2g$,
$k\in\Z$).
It satisfies an Euler product,
analogous to~\eqref{E: Euler prod},
which converges for $\Re s>1$,
$$
\zeta_\Curve(s)=q^{s(g-1)}\prod_v\frac1{1-q^{-s\deg v}},
$$
where the product is taken over all valuations of the function field of $\Curve$.
It follows that $1\leq|\omega_\nu|\leq q$.
Artin\footnote{In his thesis~\cite{ArtinThesis},
Artin only considers quadratic extensions of $\F_p(T)$,
that is,
hyperelliptic curves over $\F_p$.
Moreover,
in his zeta functions the Euler factors corresponding to the points at infinity are missing.
Later,
F.\ K.~Schmidt~\cite{fSchmidt} introduced the zeta function of a general projective curve over an arbitrary finite field.}
conjectured that~$\zeta_\Curve(s)$ has its zeros on the line $\Re s=1/2$.
In terms of the exponentials of the zeros,
the numbers~$\omega_\nu$,
this means that
$$
\text{Riemann hypothesis for $\Curve$:}\quad|\omega_\nu|=\sqrt q\ \text{ for every }\nu=1,\dots,2g.
$$
It is the analogue of the Riemann hypothesis for $\zeta_\Curve$.
It is trivially verified for $\Curve=\P^1$,
when $g=0$ and $\zeta_\Curve$ does not have any zeros.
It was proved by H.~Hasse in the case of elliptic curves ($g=1$),
and first in full generality by A.~Weil,\footnote
{Weil announced his ideas in 1940 \cite{aW40,aW41} and explained them in 1942 in a letter to Artin~\cite{letter}.
But the complete proof (see \cite{aW49,aW71}) had to await the completion of his `Foundations'~\cite{Foundations}.}
 using the intersection of divisors with the graph of Frobenius in $\Curve\times\Curve$,
and,
in his second proof,
 the action of Frobenius on the embedding of $\Curve$ in its Jacobian (see~\cite[Appendix]{Rosen}).
Later proofs,
based on one of Weil's proofs,
have been given by P.~Roquette~\cite{Roquette} and others.
There have been some attempts to translate the first and second proof to the situation of the Riemann zeta function,
when~$\Curve$ is the `curve'~$\spec\Z$,
but so far without success,
one of the obstacles being that in the category of schemes,
$\spec\Z\times\spec\Z=\spec\Z$ is one-dimensional and not two-dimensional like~$\Curve\times\Curve$ (see~\cite{indextheory}).

A completely new technique was discovered by Stepanov~\cite{Ste} in 1969,
initially only for hyperelliptic curves.
W.\ M.~Schmidt~\cite{wSchmidt} used his method to reprove the Riemann hypothesis for $\zeta_\Curve$,\footnote
{The extension of Stepanov's proof to a general $\F_q$ by W.\ M.\ Schmidt uses hyperderivations.}
and a simplified proof was given by Bombieri in~\cite{Bo} (see also~\cite{BoP}).
Bombieri's proof uses the graph of Frobenius in~\mbox{$\Curve\times\Curve$}
and the Riemann--Roch formula.\footnote
{Bombieri does not mention $\Curve\times\Curve$,
but he uses $\Fpa(\Curve)\otimes\Fpa(\Curve)$,
i.e.,
functions on $\Curve\times\Curve$.}
When Bombieri was studying Stepanov's proof,
 in order to understand the derivations in a geometric way,
he used a standard connection on the curve.
The argument worked only over the prime field because of the obstruction arising from iterating the connection $p$ times.
However,
Bombieri found that derivatives of order $p$ gave the Cartier operator,
which turned out to be the same as taking the $p$-th power of the function.
Thus the final proof does not mention connections nor the Cartier operator,
 and fits on a paper napkin \cite{Bopersonal}.
We present this proof in Section~\ref{S:RH}.
A shorter exposition of Bombieri's proof has appeared in \cite{Expositiones}.\medskip

It is interesting to see the development in these proofs.
Gradually,
more geometry that cannot be translated to the number field case has been taken out.
The question arises as to what exactly is needed to prove the Riemann hypothesis for curves,
and what we can learn from this about the Riemann hypothesis for $\spec\Z$.

Weil's first proof uses the geometry of $\Curve\times\Curve$,
and in particular the intersection of the graph of Frobenius with the diagonal.
There is some reason to believe that no analogue will ever exist for number fields,
or else,
that constructing an analogue is harder than establishing the Riemann hypothesis.
His second proof uses the Jacobian of~$\Curve$,
and again,
no analogue may ever be constructed for the integers.

Bombieri's proof uses very little of the geometry of $\Curve\times\Curve$,
 but it uses the action of Frobenius and Riemann--Roch.
Since Tate's thesis,
it is known that the Riemann--Roch equality~\eqref{E:RR} translates into formula~\eqref{E: theta flip}.
Bombieri's proof naturally divides into two steps.
In the first step,
he uses the action of Frobenius to obtain a discrete flow on the curve (here called the {\em Frobenius flow\/}),
which he analyzes to obtain an upper bound for the number of points on the curve,
which is a weak form of the prime number theorem for the curve (see Table~\ref{T: comparison}).
One sees that the horizontal coordinate of $\Curve\times\Curve$ plays an `arithmetic' role,
and the vertical coordinate plays a `geometric' role (see Remark~\ref{R:arith geo}).
In the second step,
he uses the Riemann hypothesis for~$\P^1$,
and the fact that the Frobenius automorphism generates the local Galois group (decomposition group) at a point on the curve,
to obtain a lower bound for the number of points on the curve.
Combining the two steps,
he first obtains the analogue of the prime number theorem with a good error term,
and from this it is a small step to deduce the Riemann hypothesis (see Lemma~\ref{L: RH}).
Therefore,
one might conclude that the right approach to the Riemann hypothesis is to first prove the prime number theorem with a good bound for the error term.
Looking at the first step of Bombieri's proof,
one might even believe that the key to a prime number theorem with a good error bound is to construct a function (possibly a Fourier or Dirichlet polynomial,
in the spirit of the methods of Baker,
Gelfond and Schneider) that vanishes at the first $N$ primes to a high order.
If one could bound the degree of this polynomial,
 one would obtain an upper bound for the number of primes.
This would already imply the Riemann hypothesis,
so the second step becomes unnecessary.
In Section~\ref{S: speculations} we make some speculations about how to translate Bombieri's proof to number theory.

\begin{remark}
Recently,
Alain Connes found a completely new method again~\cite{Connes},
based on the work of Shai Haran~\cite{Haran,mystery},
using harmonic analysis on the ring of adeles.
Connes conjectures that Weil's explicit formula is obtained as the trace of a certain shift operator,
and then establishes the positivity of this trace,
thus proving the Riemann hypothesis for $\spec\Z$ and for all L-functions associated with a Gr\"ossencharakter,
up to a `lemma' about special functions.
He does not use the action of Frobenius.
(See however Remark~\ref{R: shift}.)
\end{remark}

\section{The Zeta Function of a Curve over a Finite Field}
\label{S:zeta}

Throughout,
we fix a function field $K$ of transcendence degree one over a finite field of characteristic $p$.
We assume that the algebraic closure of $\F_p$ in $K$,
i.e.,
 the field of constants of $K$,
is $\F_q$,
 and we choose a function $T\in K$ such that $K$ is a finite separable extension of the field of rational functions $\F_q(T)$,
which we denote by $\q$,
$$
\q=\F_q(T).
$$
Thus,
$$
K=\q[X]/(m(T,X))
$$
for some polynomial $m$,
irreducible over the algebraic closure $\Fpa$.

\subsubsection*{The Frobenius Flow}

Geometrically,
$K$ is the field of functions on a curve $\Curve$ given by $m(T,X)=0$,
and the choice of $T$ corresponds to the choice of a projection of $\Curve$ onto $\P^1$.
Adding extra coordinates $T,X,Y,\dots$ if necessary,
we can obtain a nonsingular model for $\Curve$.

Valuations of $K$ correspond to orbits of points on $\Curve$ in the following way.
The Frobenius automorphism acts on algebraic points in $\Curve(\Fpa)$ by raising each coordinate of a point to the $q$-th power,
$$
\phi_q\colon(t,x,y,\dots)\mapsto(t^q,x^q,y^q,\dots).
$$
This action of Frobenius induces a discrete dynamical system on $\Curve(\Fpa)$,
which we call the {\em Frobenius flow\/} on $\Curve$ (see Figure~\ref{F: Frobenius}).
So a point in $\Curve(\F_{q^n})$ that is not defined over a smaller field of constants gives an orbit of this flow of $n$ points.
We normalize the corresponding valuation of $K$ so that $v(f)$ equals the order of vanishing of $f$ at any one of the points in this orbit.

\subsection{The Local Theory}
\label{S: local}

For the local theory of the zeta function,
our exposition closely follows Tate's thesis~\cite{Tate}.
The completion of $K$ at a valuation $v$ is denoted~$K_v$.
As is well known,
$K_v$ is locally compact.
It contains the ring $\oo_v$ of function elements regular at $v$,
 which has a single prime ideal
$$
\pi_v\oo_v
$$
 of functions vanishing at $v$.
Here,
$\pi_v$ is any function that has a simple zero at $v$.
The residue class field
$$
K(v)=\oo_v/\pi_v\oo_v
$$
 is a finite extension of~$\F_q$.
We denote its degree over $\F_q$ by $\deg v$.
This is also the length of the corresponding orbit of the Frobenius flow.
We write
$$
q_v=q^{\deg v}=|K(v)|
$$
 for the cardinality of $K(v)$.
By our normalization of the valuations of $K$,
we have~$v(\pi_v)=1$.
The associated norm is
$$
|\alpha|_v=q_v^{-v(\alpha)}.
$$

\subsubsection{Additive Characters and Measure}

Denote by $K_v^+$ the additive group of $K_v$,
as a locally compact commutative group,
and by $\xi$ its general element.
The character group of $K_v^+$ is determined by the following lemma:

\begin{lemma}[{[\citen{Tate}, Lemma~2.2.1]}]\label{L:chars}
If\/ $\xi\mapsto\chi(\xi)$ is one nontrivial character of\/ $K_v^+,$
then for each\/~$\eta\in K_v^+,$
$\xi\mapsto\chi(\eta\xi)$ is also a character of\/ $K_v^+$.
The correspondence $\eta\longleftrightarrow\chi(\eta\xi)$ is an isomorphism,
both topological and algebraic,
between $K_v^+$ and its character group.
\end{lemma}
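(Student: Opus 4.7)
The plan is to verify that the map $\Phi\colon\eta\mapsto\chi_\eta$, defined by $\chi_\eta(\xi):=\chi(\eta\xi)$, is (i) a homomorphism, (ii) injective, (iii) continuous, (iv) surjective, and (v) open, with surjectivity being by far the deepest point. The first three are immediate: bilinearity of multiplication gives $\chi_{\eta_1+\eta_2}=\chi_{\eta_1}\chi_{\eta_2}$; if $\chi_\eta\equiv1$ with $\eta\neq0$, then $\xi\mapsto\eta\xi$ is a bijection of $K_v^+$, forcing $\chi\equiv1$ against the hypothesis; and continuity of multiplication on $K_v$ together with the compact-open topology on the dual makes $\Phi$ continuous at $0$.

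The heart of the proof is surjectivity, and here I would exploit that $K_v$ is non-archimedean with uniformizer $\pi_v$. Since $\chi$ is continuous, there is a largest integer $c$ (its conductor) such that $\chi$ is trivial on $\pi_v^c\oo_v$. A direct computation shows that $\chi_\eta$ is trivial on $\pi_v^N\oo_v$ if and only if $\eta\in\pi_v^{c-N}\oo_v$, and that $\chi_\eta=\chi_{\eta'}$ on $\pi_v^{-n}\oo_v$ if and only if $\eta\equiv\eta'\pmod{\pi_v^{c+n}\oo_v}$. Thus $\Phi$ descends, for each pair $(n,N)$, to an injection
$$
\Phi_{n,N}\colon\pi_v^{c-N}\oo_v/\pi_v^{c+n}\oo_v\ \hookrightarrow\ \widehat{\pi_v^{-n}\oo_v/\pi_v^N\oo_v}
$$
between two finite abelian groups which, as successive quotients of $\oo_v/\pi_v\oo_v\cong K(v)$, both have cardinality $q_v^{n+N}$. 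Hence each $\Phi_{n,N}$ is a bijection.

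Given an arbitrary continuous character $\psi$ of $K_v^+$, it is trivial on some $\pi_v^N\oo_v$, and by the above one can choose, for every $n$, an $\eta_n\in\pi_v^{c-N}\oo_v$ with $\chi_{\eta_n}=\psi$ on $\pi_v^{-n}\oo_v$; consistency modulo $\pi_v^{c+n}\oo_v$ makes the sequence $(\eta_n)$ coherent in the compact set $\pi_v^{c-N}\oo_v$, so it converges to an $\eta\in K_v^+$ with $\chi_\eta=\psi$ everywhere. The same finite-level counting shows that $\Phi$ carries the neighborhood $\pi_v^{c-N}\oo_v$ of $0$ onto exactly the set of characters trivial on $\pi_v^{-n}\oo_v$, so $\Phi$ is open and therefore a topological isomorphism. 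The main obstacle is organizing the finite-level Fourier computation cleanly and arranging the coherent lifts; everything else is formal.
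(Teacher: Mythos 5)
Your argument is correct in substance, and it is worth pointing out that the paper itself offers no proof of this lemma: it is quoted directly from Tate's thesis (Lemma 2.2.1), where the standard argument runs along different lines --- Tate shows that $\eta\mapsto\chi(\eta\,\cdot)$ is an injective, bicontinuous homomorphism onto a subgroup of the character group, that this subgroup is dense (its annihilator in $K_v^+$ is trivial by the injectivity argument you also use), and that it is closed because a locally compact subgroup of a Hausdorff group is closed; surjectivity then falls out with no counting at all, and the same proof covers archimedean completions in the number-field case. Your route instead exploits the non-archimedean structure of $K_v$ head-on: the conductor $c$, the equivalence that $\chi_\eta$ is trivial on $\pi_v^N\oo_v$ precisely when $\eta\in\pi_v^{c-N}\oo_v$, the induced injections between finite groups of equal order $q_v^{n+N}$, and a compactness/coherence argument to pass to the limit. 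This is more elementary and buys an explicit finite-level description of the self-duality (and openness essentially for free), at the cost of generality that is irrelevant here since every completion of $K$ is non-archimedean. Two small slips should be repaired: the conductor is the \emph{smallest} integer $c$ with $\chi$ trivial on $\pi_v^c\oo_v$ (equivalently, $\pi_v^c\oo_v$ is the largest fractional ideal of triviality), not the largest such integer; and in the openness step the indices are crossed --- $\Phi$ carries $\pi_v^{c+n}\oo_v$ (not $\pi_v^{c-N}\oo_v$) onto the set of characters trivial on $\pi_v^{-n}\oo_v$, and these sets form a neighborhood basis of the trivial character because every compact subset of $K_v$ lies in some $\pi_v^{-n}\oo_v$ and a character taking values near $1$ on a compact subgroup is trivial on it.
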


To fix the identification of $K_v^+$ with its character group promised by this lemma,
we must construct a special nontrivial character.
We first construct additive characters for $\q=\F_q(T)$.
The restriction of $v$ to $\q$ is either a multiple of a $P$-adic valuation,
for an irreducible polynomial $P$,
or a multiple of $v_\infty$,
the valuation at infinity,
corresponding to $P(T)=1/T$.
Let~$\q_P$ be the completion of~$\q$ at $P$.
Thus each element of $\q_P$ is a Laurent series of terms $aT^kP^n$,
for $a\in\F_q$,
$0\leq k<\deg P$ ($k=0$ if $P=1/T$),
with only finitely many terms with a negative power of~$P$.

Recall that the characteristic of~$\q$ is $p$.
We identify~$\F_p$ with~$\Z/p\Z$,
so that for $t\in\F_p$,
the rational number~$t/p$ is well defined modulo~$\Z$.
Define a character $\chi_P$ on $\q_P$ as follows:
If $P(T)=1/T$,
then
\begin{gather}\label{E:chiinfty}
\chi_\infty(aT^n)=\begin{cases}
1			&\text{if }n\neq-1\\
\exp\Bigl(-\frac{2\pi i}p\Tr_{\F_q/\F_p}(a)\Bigr)&\text{if }n=-1,
\end{cases}
\end{gather}
where $\Tr_{\F_q/\F_p}(a)$ denotes the trace of $a\in\F_q$ over $\F_p=\Z/p\Z$.
If $P(T)$ is an irreducible monic polynomial of degree~$d$ with coefficients in $\F_q$,
then we put for $0\leq k\leq d-1$ and $a\in\F_q$,
\begin{gather}\label{E:chiP}
\chi_P(aT^kP^n)=\begin{cases}
1			&\text{if $n\neq-1$ or }0\leq k\leq d-2\\
\exp\Bigl(\frac{2\pi i}p\Tr_{\F_q/\F_p}(a)\Bigr)	&\text{if $n=-1$ and }k=d-1.
\end{cases}
\end{gather}
See \cite[Lemma~2.2.2]{Tate} for a proof of the following lemma.

\begin{lemma}
$\xi\mapsto\chi_P(\xi)$ is a nontrivial,
continuous additive map of\/ $\q_P$ into the unit circle group.
\end{lemma}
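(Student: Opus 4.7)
The plan is to exploit the unique representation of every $\xi\in\q_P$ as a convergent series over the standard monomials, reduce $\chi_P$ to a single coefficient-extraction composed with a fixed character of $\F_q^+$, and then verify each of the three stated properties in turn.

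First I would fix notation for the case that $P$ is an irreducible monic polynomial of degree $d$ (the case $P(T)=1/T$ being analogous with $d=1$ and $k=0$). Every element $\xi\in\q_P$ has a unique convergent expansion
$$
\xi=\sum_{n\geq N(\xi)}\sum_{k=0}^{d-1}a_{n,k}(\xi)\,T^kP^n,\qquad a_{n,k}(\xi)\in\F_q,
$$
because $\{T^k\}_{0\leq k<d}$ is an $\F_q$-basis of the residue field $\F_q[T]/(P)$. Each map $\xi\mapsto a_{n,k}(\xi)$ is $\F_q$-linear and continuous. Since the definition of $\chi_P$ on monomials makes $\chi_P(aT^kP^n)=1$ except when $(n,k)=(-1,d-1)$, extension by multiplicativity forces
$$
\chi_P(\xi)=\exp\!\Bigl(\tfrac{2\pi i}{p}\Tr_{\F_q/\F_p}\bigl(a_{-1,d-1}(\xi)\bigr)\Bigr),
$$
and the product on the right-hand side of the monomial definition collapses to a single factor, so this formula is well-defined on all of $\q_P$ and agrees with \eqref{E:chiP} on monomials.

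With $\chi_P$ written in this closed form, additivity is immediate: $\xi\mapsto a_{-1,d-1}(\xi)$ is $\F_q$-linear, $\Tr_{\F_q/\F_p}$ is $\F_p$-linear, and $t\mapsto\exp(2\pi it/p)$ is a homomorphism $\F_p\to S^1$. Continuity also follows easily: if $v_P(\xi)\geq 0$, i.e.\ $\xi\in\oo_P$, then $a_{n,k}(\xi)=0$ for all $n<0$, so $\chi_P(\xi)=1$. Hence $\chi_P$ is trivial on the open subgroup $\oo_P$, which by the standard criterion for group homomorphisms into a topological group suffices for continuity on $\q_P$. Nontriviality is witnessed by $\xi=T^{d-1}/P$, for which $a_{-1,d-1}(\xi)=1$ and $\Tr_{\F_q/\F_p}(1)=1\neq 0$ in $\F_p$.

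The case $P(T)=1/T$ requires only a bookkeeping change: every $\xi\in\q_\infty=\F_q((1/T))$ has a unique expansion $\sum_{n\geq N}a_n T^{-n}$, the definition \eqref{E:chiinfty} reduces $\chi_\infty$ to $\exp(-\tfrac{2\pi i}{p}\Tr_{\F_q/\F_p}(a_{1}))$ (the coefficient of $1/T$), which is trivial on $\oo_\infty$ and nontrivially attained on, say, $\xi=1/T$. The only step where one has to be a little careful is verifying that the monomial formula is consistent, i.e.\ that the infinite product extending the definition reduces to the single factor above; this is essentially the observation that, inside $\q_P$, the standard monomials $T^kP^n$ with $0\leq k<d$ form a topological $\F_q$-basis so that no cancellation among ``carries'' can occur. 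No genuine obstacle arises, and the lemma is in this sense a direct consequence of the structure of the completion.
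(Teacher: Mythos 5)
Your overall strategy is sound and is essentially the standard one (the paper itself gives no proof here, deferring to Tate's Lemma~2.2.2): since addition in $\q_P$ is coefficientwise in the expansion over the monomials $T^kP^n$ (no carries, unlike the $p$-adic case), $\chi_P$ collapses to the coefficient of $T^{d-1}P^{-1}$ composed with $\Tr_{\F_q/\F_p}$ and $t\mapsto e^{2\pi it/p}$, which gives additivity, and continuity follows from triviality on an open subgroup. However, two specific steps are wrong as written. First, your nontriviality witness relies on the claim $\Tr_{\F_q/\F_p}(1)=1\neq0$; in fact $\Tr_{\F_q/\F_p}(1)=f\bmod p$ where $q=p^f$, which vanishes whenever $p\mid f$ (already for $q=4$). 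Nontriviality must instead invoke the fact that the trace of a finite (hence separable) extension of finite fields is surjective, so one chooses $a\in\F_q$ with $\Tr_{\F_q/\F_p}(a)\neq0$ and takes $\xi=aT^{d-1}P^{-1}$ (resp.\ $\xi=a/T$ at infinity).

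Second, for $P=1/T$ you assert that $\chi_\infty$ is trivial on $\oo_\infty$ and then exhibit nontriviality at $\xi=1/T$ --- but $1/T\in\oo_\infty$, so these two claims contradict each other. Since elements of $\oo_\infty$ are series $a_0+a_1T^{-1}+a_2T^{-2}+\dots$ and $\chi_\infty$ reads off the coefficient of $T^{-1}$, the character is \emph{not} trivial on $\oo_\infty$; it is trivial only on the open ideal $T^{-2}\oo_\infty$ (as the paper notes just after the residue discussion). Continuity at the infinite place therefore still holds, but via triviality on $T^{-2}\oo_\infty$, not on $\oo_\infty$. With these two corrections (surjectivity of the trace for nontriviality, and the correct open subgroup at infinity) your argument goes through.
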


Unlike in the number field case,
it is not true that $\chi_P(\xi)=1$ if and only if $\xi$ is a $P$-adic integer.
In fact,
$\chi_P$ depends only on the coefficient of $1/P$ (or of $1/T$ at the infinite valuation).
We will see the geometric meaning of~$\chi_P$ and $\chi_\infty$ by relating it to a residue.

\begin{lemma}\label{L: trace=delta}
Let\/ $P$ be an irreducible monic polynomial of degree $d$ with coefficients in~$\F_q$.
Let\/ $\q(P)$ be the residue class field\/ $\F_q[T]/(P)$.
Then
$$
\Tr_{\q(P)/\F_q}\left(\frac{T^k}{P'(T)}+(P)\right)=\begin{cases}
0	&\text{if\/ }0\leq k\leq d-2\\
1	&\text{if\/ }k=d-1.
\end{cases}
$$
\end{lemma}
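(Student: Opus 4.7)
The plan is to identify the trace on the left with a sum of residues on $\P^1$ and then read off the value by expanding $T^k/P(T)$ at infinity.

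Since $\F_q$ is perfect and $P$ is irreducible, it is separable; fix a root $\alpha\in\Fpa$, so that $\q(P)=\F_q(\alpha)$ and the other roots $\alpha_1=\alpha,\alpha_2,\dots,\alpha_d$ are its Galois conjugates. The element whose trace we want is the class of $T^k/P'(T)$, which corresponds in $\q(P)$ to $\alpha^k/P'(\alpha)$. Because the Galois conjugates of $\alpha^k/P'(\alpha)$ are exactly $\alpha_i^k/P'(\alpha_i)$, we have
$$
\Tr_{\q(P)/\F_q}\!\left(\frac{T^k}{P'(T)}+(P)\right)
=\sum_{i=1}^d\frac{\alpha_i^k}{P'(\alpha_i)}.
$$

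I would next recognize each summand as a local residue. Since $P(T)=\prod_i(T-\alpha_i)$, the rational function $T^k/P(T)$ has simple poles at the $\alpha_i$, and at $\alpha_i$
$$
\res_{\alpha_i}\frac{T^k}{P(T)}\,dT=\frac{\alpha_i^k}{P'(\alpha_i)}.
$$
The residue theorem on $\P^1$ over $\Fpa$ says that the sum of all residues of the meromorphic differential $T^k/P(T)\,dT$ equals zero, so
$$
\sum_{i=1}^d\frac{\alpha_i^k}{P'(\alpha_i)}
=-\res_{\infty}\frac{T^k}{P(T)}\,dT.
$$

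It remains to compute the residue at infinity, which I would do by a direct Laurent expansion in the local parameter $u=1/T$ (using $dT=-du/u^2$), or equivalently by reading off the coefficient of $T^{-1}$ in the expansion of $T^k/P(T)$ at infinity. For $0\leq k\leq d-2$ one has $T^k/P(T)=O(T^{-2})$, so this coefficient vanishes; for $k=d-1$, since $P$ is monic, $T^{d-1}/P(T)=T^{-1}+O(T^{-2})$, so the coefficient is $1$. The residue at infinity of $f(T)\,dT$ is the negative of this coefficient, so combining with the displayed identity gives the desired values $0$ and $1$.

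There is no serious obstacle: the argument is essentially bookkeeping once one adopts the residue-theorem viewpoint. The one place to be careful is the normalization of $\res_\infty$ (the sign coming from $dT=-du/u^2$), since the sign produced there is precisely what converts the vanishing $\sum_i\alpha_i^{d-1}/P'(\alpha_i)+\res_\infty(\cdot)=0$ into the value $+1$ on the arithmetic side.
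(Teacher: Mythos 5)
Your argument is correct, and it shares the paper's first step: identifying the trace with the sum $\sum_{i}\alpha_i^k/P'(\alpha_i)$ over the conjugate roots. Where you diverge is in how that sum is evaluated. The paper deliberately avoids analysis in characteristic $p$: it lifts $P$ to a polynomial $\widetilde P$ over a number field reducing to $P$ modulo a prime above $p$, notes that the roots of $\widetilde P$ are again distinct, and proves the corresponding identity among the lifted roots by the classical complex residue theorem (the contour integral of $z^k/m(z)$ over a large circle, Lemma~\ref{L: residues}); the values $0$ and $1$ in $\F_q$ then follow by reduction. You instead stay in characteristic $p$ and invoke the residue theorem for rational differentials on $\P^1$ over $\Fpa$, computing the residue at infinity in the local parameter $u=1/T$, with the sign from $dT=-du/u^2$ handled correctly. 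That theorem does hold in every characteristic and, for $\P^1$, is elementary (partial fractions reduce it to the differentials $dT/(T-a)^n$ and $T^m\,dT$), so your proof is complete provided you either cite it or spend a line verifying it; this is essentially the ``more elementary'' route the paper attributes to Euler and points to in Artin and Stichtenoth. The trade-off: your version is self-contained in the function-field setting and needs no lifting, while the paper's version replaces the characteristic-$p$ residue theorem by the familiar complex one, at the modest cost of the lifting argument and the separability observation.
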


\begin{proof}
Let $t_1,\dots,t_d$ be the roots of $P$ in $\q(P)$.
Then,
$$
\Tr_{\q(P)/\F_q}\left(\frac{T^k}{P'(T)}+(P)\right)=\sum_{i=1}^d\frac{t_i^k}{P'(t_i)}.
$$
We can lift $P$ to a polynomial $\widetilde P$ over a number field $F$ that reduces to~$P$ modulo a prime ideal $\pr$ of $F$ above $p$ with residue class field $\F_q$.
The roots of~$\smash{\widetilde P}$ then reduce to the roots $t_i$ modulo a prime ideal above $\pr$ in the splitting field of $\smash{\widetilde P}$.
Since the roots of $P$ are all distinct,
so are those of~$\smash{\widetilde P}$.
The lemma now follows from the following more general lemma,
which asserts that the required equality already holds without taking the class in~$\F_q$.
\end{proof}

\begin{lemma}\label{L: residues}
Let\/ $m(X)$ be a monic polynomial over\/ $\C$ of degree\/~$d$ without repeated roots.
Let\/ $a_1,\dots,a_d\in\C$ be the roots of\/ $m$.
Then
$$
\sum_{i=1}^d\frac{a_i^k}{m'(a_i)}=\begin{cases}0&\text{for }0\leq k\leq d-2,\\
1&\text{for }k=d-1.
\end{cases}
$$
\end{lemma}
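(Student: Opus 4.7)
The plan is to recognize the sum as a sum of residues and to evaluate it via the residue at infinity. Since $m$ has $d$ distinct roots, the rational function $f(X) = X^k/m(X)$ has simple poles at each $a_i$ with $\res_{X=a_i} f = a_i^k/m'(a_i)$, so the sum we want is precisely $\sum_{i=1}^d \res_{X=a_i} f(X)$. I will then invoke the fact that for any rational function on $\P^1(\C)$ the sum of all residues, including the residue at infinity, is zero. This reduces the problem to computing the coefficient of $X^{-1}$ in the Laurent expansion of $f$ at infinity.

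To extract that coefficient, I would expand $f$ at infinity: since $m$ is monic of degree $d$, $1/m(X) = X^{-d}(1 + O(X^{-1}))$, and hence $f(X) = X^{k-d} + O(X^{k-d-1})$. If $0 \le k \le d-2$, then $k - d \le -2$, so every term in the Laurent series at infinity has exponent at most $-2$, forcing the coefficient of $X^{-1}$ to vanish. If $k = d-1$, then $f(X) = X^{-1} + O(X^{-2})$, and the coefficient of $X^{-1}$ is $1$. Combining the two cases yields exactly the claimed values.

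An equally clean alternative would be Lagrange interpolation: the polynomials $L_i(X) = m(X)/((X-a_i)m'(a_i))$ form the Lagrange basis for polynomials of degree at most $d-1$, so $X^k = \sum_i a_i^k L_i(X)$ for $0 \le k \le d-1$, and comparing leading coefficients (each $L_i$ has leading coefficient $1/m'(a_i)$, since $m$ is monic of degree $d$) gives the identity directly. There is no serious obstacle in either approach; the content of the statement is simply that $X^k/m(X)$ decays strictly faster than $X^{-1}$ at infinity precisely when $k < d-1$.
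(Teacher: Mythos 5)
Your main argument is essentially the paper's proof: the paper integrates $z^k/m(z)$ over a large circle $\Gamma_r$, applies the residue theorem, and evaluates the integral for large $r$ by comparing with $z^k/z^d$ — which is exactly your ``sum of all residues on $\P^1(\C)$ vanishes, compute the residue at infinity from the Laurent expansion'' phrased as a limit. Both versions are correct, and the key computation ($k-d\leq-2$ versus $k-d=-1$) is identical. Your Lagrange interpolation alternative, however, is a genuinely different and purely algebraic route: writing $X^k=\sum_i a_i^k\,m(X)/((X-a_i)m'(a_i))$ and comparing coefficients of $X^{d-1}$ proves the identity over any field in which $m$ has $d$ distinct roots, not just over $\C$; this is worth noting because the paper, having stated Lemma~\ref{L: residues} only over $\C$, must lift the polynomial to characteristic zero in the proof of Lemma~\ref{L: trace=delta}, a step the interpolation argument would render unnecessary.
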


\begin{proof}
Consider the contour integral
$$
I=\int_{\Gamma_r}\frac{z^k}{m(z)}\,\frac{dz}{2\pi i}
$$
 over the circle of radius $r$,
large enough so that it encloses all the roots $a_1,\dots,a_d$.
By the residue theorem,
$$
I=\sum_{i=1}^d\frac{a_i^k}{m'(a_i)}.
$$
On the other hand,
for large values of $r$,
$$
I\rightarrow\int_{\Gamma_r}\frac{z^k}{z^d}\,\frac{dz}{2\pi i},
$$
so that $I=0$ if $0\leq k\leq d-2$ and $I=1$ for~\mbox{$k=d-1$}.
\end{proof}

\begin{remark}
Lemma~\ref{L: trace=delta} is due to Euler.
See~\cite[p.~90]{Artin} or the proof of~\cite[Theorem III.5.10, p.~97]{Stichtenoth} for a more elementary proof.
\end{remark}

Applying Lemma~\ref{L: trace=delta} to Definition~\eqref{E:chiP},
we may write for $n\geq-1$,
$$
\chi_P(aT^kP^n)=\exp\biggl(\frac{2\pi i}p
\Tr_{\F_q/\F_p}\biggl(\Tr_{\q(P)/\F_q}\left(\frac P{P'}aT^kP^{n}+(P)\right)\biggr)\biggr).
$$
In this formula,
we do not need to assume anymore that $P$ is monic.
Note that it also gives the right value in the case~$P=1/T$ for~$n\geq1$,
by Definition~\eqref{E:chiinfty}.

\begin{remark}
In general,
$\smash{\int_{\Gamma_r}z^km^n(z)\,{dz}/{2\pi i}}$ vanishes for $n\neq-1$,
if~$\Gamma_r$ encircles every root of~$m$.
This follows for $n\leq-2$ by the same limit argument as in the proof of Lemma~\ref{L: residues},
and for $n\geq0$,
it follows since the integrand is holomorphic.
Therefore,
we could write the character symbolically as
$$
\chi_P(\xi)=\exp\biggl(\frac{2\pi i}p\Tr_{\F_q/\F_p}\biggl(\oint_{P}\xi\,\frac{dT}{2\pi i}\biggr)\biggr),
$$
where the notation indicates that all the roots of the polynomial~$P$ are to be encircled.
This motivates the following definition.
\end{remark}

\begin{definition}
For a Laurent series $\xi=x_{n}P^{n}+x_{n+1}P^{n+1}+\dots$,
 with $x_i\in\F_q[T]$ of degree $\deg x_i<\deg P$,
 the {\em sum of the residues of\/ $\xi$ at the points where $P$ vanishes\/} is
\begin{gather*}
\res_P(\xi)=\Tr_{\q(P)/\F_q}(x_{-1}/P'+(P)).
\end{gather*}

And for a Laurent series $\xi=a_{n}T^{-n}+a_{n+1}T^{-n-1}+\dots$ with coefficients $a_i\in\F_q$,
the {\em residue at infinity\/} is
\begin{gather*}
\res_\infty(\xi)=-a_{1}.
\end{gather*}
\end{definition}

With these definitions,
we can simply write,
for an irreducible polynomial~$P$ or~$P=1/T$,
\begin{gather*}
\chi_P(\xi)=\exp\biggl(\frac{2\pi i}p\Tr_{\F_q/\F_p}(\res_P(\xi))\biggr).
\end{gather*}
Note that for~$P\neq1/T$,
$\chi_P$ is trivial on~$\oo_P$.
On the other hand,
$\chi_\infty$ is trivial on the ideal~$T^{-2}\oo_\infty$ of~$\oo_\infty$.\medskip

After these preliminaries for $\q=\F_q(T)$,
it is easy to construct a character on~$K_v^+$.
Recall that we have chosen a function $T$ such that $K$ is a finite separable extension of $\q$.
Define for $\xi\in K_v^+$,
\begin{gather*}
\chi_v(\xi)=\chi_P(\Tr_{v/P}(\xi)),
\end{gather*}
where~$v_P$ is the restriction of~$v$ to~$\q$ and $\Tr_{v/P}$ denotes the trace from $K_v$ to $\q_P$.
Recalling that $\smash{\Tr_{v/P}}$ is an additive continuous map of $K_v$ onto $\q_P$,
we see that $\chi_v$ is a nontrivial character of $K_v^+$.
By Lemma~\ref{L:chars},
we have proved:

\begin{theorem}\label{T: K=K^}
$K_v^+$ is naturally its own character group if we identify the character\/ $\xi\mapsto\chi_v(\eta\xi)$ with the element\/ $\eta\in K_v^+$.
\end{theorem}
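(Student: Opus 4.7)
The plan is to reduce everything to Lemma~\ref{L:chars}: once we show that $\chi_v$ is a single nontrivial continuous additive character of $K_v^+$, that lemma immediately gives the claimed topological-algebraic isomorphism $\eta\leftrightarrow\chi_v(\eta\xi)$ between $K_v^+$ and its Pontryagin dual.

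First I would check the three formal properties of $\chi_v$. Additivity is automatic, because $\chi_v=\chi_P\circ\Tr_{v/P}$ is a composition of the $\q_P$-valued trace (which is $\F_q$-linear, hence additive) with the additive character $\chi_P$ already constructed on $\q_P^+$. Continuity follows from the same composition: the trace $\Tr_{v/P}\colon K_v\to\q_P$ is continuous because $K_v/\q_P$ is a finite extension of local fields, and $\chi_P$ is continuous by the lemma preceding the definition. So the only real content is nontriviality.

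For nontriviality I would invoke surjectivity of $\Tr_{v/P}$. Since we assumed at the outset of Section~\ref{S:zeta} that $K/\q$ is finite separable, the extension $K_v/\q_P$ is also finite separable (each completion is a separable extension of the corresponding completion of $\q$), and so its trace form is nondegenerate. In particular $\Tr_{v/P}\colon K_v\to\q_P$ is surjective. Because $\chi_P$ was shown to be a nontrivial character of $\q_P^+$, we may pick $\eta_0\in\q_P$ with $\chi_P(\eta_0)\neq1$; surjectivity of the trace then produces $\xi_0\in K_v$ with $\Tr_{v/P}(\xi_0)=\eta_0$, so $\chi_v(\xi_0)\neq1$.

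Having verified that $\chi_v$ is a nontrivial continuous character of $K_v^+$, Lemma~\ref{L:chars} applied with this specific choice delivers the isomorphism $\eta\longleftrightarrow\chi_v(\eta\xi)$ between $K_v^+$ and $\widehat{K_v^+}$, proving the theorem. I expect the one place where care is genuinely needed is the surjectivity of $\Tr_{v/P}$, which is exactly where the separability hypothesis on $K/\q$ (imposed precisely so that such trace arguments work uniformly at every valuation) is used; the rest is bookkeeping built on the local characters $\chi_P$ and $\chi_\infty$ constructed in the preceding lemmas.
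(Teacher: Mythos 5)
Your proposal is correct and takes essentially the same route as the paper: the paper likewise observes that $\Tr_{v/P}$ is an additive continuous map of $K_v$ \emph{onto} $\q_P$ (separability being what guarantees surjectivity), so that $\chi_v=\chi_P\circ\Tr_{v/P}$ is a nontrivial continuous character, and then invokes Lemma~\ref{L:chars}. Your expansion of the surjectivity step via nondegeneracy of the trace form is just a more explicit version of what the paper states in one line.
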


The {\em different\/} of $K_v$ over $\q_P$ is defined by
$$
\diff_{v/P}^{-1}=\{\eta\in K_v\colon\Tr_{v/P}(\eta\xi)\in\oo_P\text{ for all }\xi\in\oo_v\}.
$$
The different is clearly an $\oo_v$-module that contains $\oo_v$.
Moreover,
by separability,
it is not all of $K_v$.
Therefore,
$$
\diff_{v/P}=\pi_v^{d(v/P)}\oo_v
$$
 for some exponent $d(v/P)\geq0$.

\begin{lemma}
Let\/~$v$ be a finite valuation.
The character\/ $\xi\mapsto\chi_v(\eta\xi)$ associated with\/~$\eta$ is trivial on\/ $\oo_v$ if and only if\/ $\eta\in\diff_{v/P}^{-1}$.

For an infinite valuation,
that is,
$v(T)<0,$
the character\/ $\xi\mapsto\chi_v(\eta\xi)$ associated with\/~$\eta$ is trivial on\/ $\oo_v$ if and only if\/ $\eta\in T^{-2}\diff_{v/\infty}^{-1}$.
\end{lemma}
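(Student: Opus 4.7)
The plan is to decouple the two directions and reduce everything to two facts about $\chi_P$: that $\chi_P$ is trivial on $\oo_P$ when $P$ is finite and on $T^{-2}\oo_\infty$ when $P=1/T$ (both noted just before the statement), together with the sharper claim that these are the largest $\oo_P$-submodules of $\q_P$ on which $\chi_P$ vanishes. The relation $\chi_v=\chi_P\circ\Tr_{v/P}$ translates the question about $\chi_v$ on $\oo_v$ into a question about $\chi_P$ on the $\oo_P$-module $\Tr_{v/P}(\eta\oo_v)$.

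The easy direction is immediate from the defining property of the different. In the finite case, if $\eta\in\diff_{v/P}^{-1}$, then $\Tr_{v/P}(\eta\xi)\in\oo_P$ for every $\xi\in\oo_v$, and hence $\chi_v(\eta\xi)=\chi_P(\Tr_{v/P}(\eta\xi))=1$. In the infinite case, writing $\eta=T^{-2}\eta'$ with $\eta'\in\diff_{v/\infty}^{-1}$ and using $\q_\infty$-linearity of the trace, one has $\Tr_{v/\infty}(\eta\xi)=T^{-2}\Tr_{v/\infty}(\eta'\xi)\in T^{-2}\oo_\infty$, on which $\chi_\infty$ is trivial.

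For the converse, set $S=\Tr_{v/P}(\eta\oo_v)\subseteq\q_P$. Since $K/\q$ is finite separable, $\oo_v$ is a finitely generated $\oo_P$-module, and $\Tr_{v/P}$ is $\oo_P$-linear, so $S$ is a finitely generated $\oo_P$-submodule of $\q_P$, hence of the form $P^{n_0}\oo_P$ for some $n_0\in\Z$ (the case $S=0$ is trivial). Assuming the character is trivial on $\oo_v$, we have $\chi_P|_S=1$, and I must show $n_0\geq 0$ in the finite case and $n_0\geq 2$ in the infinite case; these translate respectively to $\eta\in\diff_{v/P}^{-1}$ and, after clearing $T^2$, to $\eta\in T^{-2}\diff_{v/\infty}^{-1}$.

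The crux is thus the sharpness statement. Picking $a\in\F_q$ with $\Tr_{\F_q/\F_p}(a)\neq 0$ (possible by surjectivity of the trace), formula~\eqref{E:chiP} gives $\chi_P(aT^{d-1}P^{-1})=\exp(2\pi i\Tr_{\F_q/\F_p}(a)/p)\neq 1$, so $\chi_P$ is nontrivial on $P^{-1}\oo_P$; and~\eqref{E:chiinfty} gives $\chi_\infty(aT^{-1})\neq 1$, so $\chi_\infty$ is nontrivial on $T^{-1}\oo_\infty$. These rule out $n_0<0$ in the finite case and $n_0<2$ in the infinite case, completing the converse. The main subtlety is exactly this sharpness step, which forces one to revisit the explicit formulas defining $\chi_P$ rather than merely using the trivialization statements recorded before the lemma.
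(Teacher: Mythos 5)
Your proof is correct, and it differs from the paper's mainly in how the converse is organized. The forward direction is the same as the paper's (the defining property of $\diff_{v/P}^{-1}$, plus $\q_P$-linearity of the trace to clear the factor $T^{-2}$ at infinity). For the converse, the paper argues contrapositively, and only writes out the infinite case: assuming $\eta\notin T^{-2}\diff_{v/\infty}^{-1}$, it picks $\xi\in\oo_v$ with $\Tr_{v/\infty}(T^2\eta\xi)\notin\oo_\infty$, reads off the expansion $\Tr_{v/\infty}(\eta\xi)=a_nT^{-n-2}+\cdots$ with $a_n\neq0$, $n\leq-1$, and then tests the character on $T^{n+1}\xi\in\oo_v$, so that the offending coefficient lands in the $T^{-1}$ slot and produces a nontrivial value. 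You instead classify the $\oo_P$-module $S=\Tr_{v/P}(\eta\oo_v)$ as a fractional ideal $P^{n_0}\oo_P$ and compare it with the exact ``conductor'' of $\chi_P$ (trivial on $\oo_P$ but not on $P^{-1}\oo_P$; trivial on $T^{-2}\oo_\infty$ but not on $T^{-1}\oo_\infty$). Both converses rest on the same sharpness fact about $\chi_P$, but your packaging treats the finite and infinite valuations uniformly, and, by choosing the nontriviality witness $a$ with $\Tr_{\F_q/\F_p}(a)\neq0$ from the start, it sidesteps a point the paper glosses over: the nonzero coefficient $a_n$ produced there could a priori have trace zero to $\F_p$, so strictly one should also scale $\xi$ by a suitable constant in $\F_q^*$. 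Conversely, the paper's explicit construction avoids invoking finite generation of $\oo_v$ over $\oo_P$; you could avoid it as well, since any $\oo_P$-submodule of $\q_P$ on which $\chi_P$ is trivial is automatically zero or a fractional ideal (it cannot be all of $\q_P$, as $\chi_P$ is nontrivial), which would make your argument self-contained given only the facts recorded before the lemma.
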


\begin{proof}
We only prove this for a valuation above infinity.
Let $\smash{\eta\in T^{-2}\diff_{v/\infty}^{-1}}$.
Then $\smash{T^2\eta}$ lies in the inverse different,
hence for~$\xi\in\oo_v$,
we have $\smash{\Tr_{v/\infty}(T^2\eta\xi)\in\oo_\infty}$.
By linearity of the trace,
$\Tr_{v/\infty}(\eta\xi)\in T^{-2}\oo_\infty$,
so that the character value $\smash{\chi_v(\eta\xi)}$ is $1$.

If $\smash{\eta\not\in T^{-2}\diff_{v/\infty}^{-1}}$,
then we can find~$\xi\in\oo_v$ such that $\Tr_{v/\infty}(T^2\eta\xi)\not\in\oo_\infty$.
This means that~$\Tr_{v/\infty}(\eta\xi)$ has an expansion of the form $a_nT^{-n-2}+a_{n+1}T^{-n-3}+\dots$,
where~$a_n\neq0$ and~$n\leq-1$.
Then~$T^{n+1}\xi\in\oo_v$,
and~$\chi_v(\eta T^{n+1}\xi)=\exp\bigl(-\frac{2\pi i}p a_n\bigr)$ is nontrivial.
\end{proof}

The ramification index of a valuation $v$ that restricts to $v_\infty$ on $\q$ (i.e.,
$v(T)<0$) is denoted by~$e(v/\infty)$ (also see~\eqref{E:e}).
The {\em canonical exponent\/} is defined by
\begin{gather}\label{E: kv}
k_v=\begin{cases}
d(v/P)&\text{if }v(T)\geq0\text{ and $v$ restricts to }v_P,\\
d(v/\infty)-2e(v/\infty)&\text{if }v(T)<0.
\end{cases}
\end{gather}
In particular,
$k_P=0$ for every finite valuation of $\q$,
and $k_\infty=-2$.
The last lemma can be summarized as follows:
the character $\chi_v(\eta\xi)$ is trivial on~\mbox{$\xi\in\oo_v$} if and only if $\eta\in\pi_v^{-k_v}\oo_v$.
In general,
$k_v\geq0$,
but above infinity,
 the canonical exponent may be negative.
Moreover,
it depends on the choice of the function $T$ in $K$.
In Section~\ref{S: RR} we will see that $\sum_vk_vv$ is a canonical divisor of the curve $\Curve$.\medskip

Let $\mu$ be a Haar measure for $K_v^+$.
As in~\cite[Lemmas~2.2.4 and~2.2.5]{Tate},
for a measurable set $M$ in $K_v^+$ and $\alpha\neq0$ in $K_v$,
$$
\mu(\alpha M)=|\alpha|_v\mu(M).
$$
This explains our choice of normalization for the absolute value:
the norm~$|\alpha|_v$ is the factor by which the additive group $K_v^+$ is stretched under the transformation $\xi\mapsto\alpha\xi$.
For the integral,
this means that
$$
\int_{K_v^+} f(\xi)\,\mu(d\xi)=|\alpha|_v\int_{K_v^+} f(\alpha\xi)\,\mu(d\xi).
$$

Let us now select a fixed Haar measure for the additive group $K_v^+$.
Theorem~\ref{T: K=K^} enables us to do this in an invariant way by selecting the measure which is its own Fourier transform under the interpretation of $K_v^+$ as its own character group established in that theorem.
We state the choice of measure which does this,
writing~$d_v\xi$ instead of $\mu(d\xi)$ and $q_v=q^{\deg v}$.
Define
\begin{gather*}
d_v\xi=\text{that measure for which }\oo_v\text{ gets measure }q_v^{-k_v/2}.
\end{gather*}

The Fourier transform is defined for integrable functions,
and the inversion formula holds for integrable continuous functions for which the Fourier transform is also integrable.
See~\cite[Theorem~2.2.2]{Tate} for details.

\begin{theorem}
If we define the Fourier transform $\Fo_vf$ of an integrable continuous function~$f$ by
\begin{gather}\label{E:F_v}
\Fo_v f(\eta)=\int_{K_v^+}f(\xi)\chi_v(\eta\xi)\,d_v\xi,
\end{gather}
then with our choice of measure,
the following inversion formula holds:
$$
f(\xi)=\int_{K_v^+}\Fo_v f(\eta)\chi_v(-\xi\eta)\,d_v\eta=\Fo_v\Fo_v f(-\xi)
$$
\end{theorem}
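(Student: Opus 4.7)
The plan is to follow Tate's approach: use the Pontryagin self-duality of $K_v^+$ from Theorem~\ref{T: K=K^} to reduce the inversion formula to an abstract statement in harmonic analysis on a locally compact abelian group, and then pin down the correct normalization by testing on a single explicit function.

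Once $K_v^+$ is identified with its own character group via $\eta\leftrightarrow\chi_v(\eta\,\cdot)$, the general Plancherel/inversion theorem for locally compact abelian groups guarantees that there is a unique Haar measure on $K_v^+$ for which $\Fo_v^2f(\xi)=f(-\xi)$ holds on the natural class of continuous integrable functions with integrable Fourier transform. It therefore suffices to show that $d_v\xi$, which assigns $\oo_v$ the measure $q_v^{-k_v/2}$, is this self-dual measure.

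The natural test function is $f=\mathbf{1}_{\oo_v}$. First I would compute
$$\Fo_vf(\eta)=\int_{\oo_v}\chi_v(\eta\xi)\,d_v\xi.$$
By the lemma immediately preceding the theorem, $\xi\mapsto\chi_v(\eta\xi)$ is trivial on $\oo_v$ if and only if $\eta\in\pi_v^{-k_v}\oo_v$; in that case the integrand is $1$ and the integral equals $q_v^{-k_v/2}$. Otherwise, choosing $N$ large enough that $\chi_v(\eta\,\cdot)$ is trivial on $\pi_v^N\oo_v$, the integral factors through the finite quotient $\oo_v/\pi_v^N\oo_v$ as the sum of a nontrivial character and therefore vanishes. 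Thus $\Fo_vf=q_v^{-k_v/2}\mathbf{1}_{\pi_v^{-k_v}\oo_v}$. Applying $\Fo_v$ a second time, and using $\mu(\pi_v^{-k_v}\oo_v)=q_v^{k_v}\cdot q_v^{-k_v/2}=q_v^{k_v/2}$ from the scaling law $\mu(\alpha M)=|\alpha|_v\mu(M)$, the same character-orthogonality argument with the roles of $\oo_v$ and $\pi_v^{-k_v}\oo_v$ interchanged gives $\Fo_v^2f(\xi)=q_v^{-k_v/2}\cdot q_v^{k_v/2}\mathbf{1}_{\oo_v}(\xi)=f(-\xi)$, since $f$ is even. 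This verifies inversion on the test function and identifies $d_v\xi$ as the self-dual measure.

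To extend to general $f$, I would use the standard symmetries of the Fourier transform: translation $f(\cdot-a)$ and modulation $\chi_v(b\,\cdot)f$ transform into modulation and translation, respectively, so inversion passes from $\mathbf{1}_{\oo_v}$ to all functions of the form $\chi_v(b\xi)\mathbf{1}_{\pi_v^n\oo_v}(\xi-a)$, whose linear span is dense in the admissible class. The main obstacle is the explicit Fourier transform computation above: it requires pairing the characterization of characters trivial on $\oo_v$ with character orthogonality on the finite quotient $\oo_v/\pi_v^N\oo_v$, and one must track the two factors $q_v^{-k_v/2}$ and $q_v^{k_v/2}$ carefully so that they cancel to give exactly $f(-\xi)$ rather than a scalar multiple; it is precisely this cancellation that forces the choice of normalization $q_v^{-k_v/2}$ for $\mu(\oo_v)$.
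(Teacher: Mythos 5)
Your proposal is correct and follows essentially the same route as the paper, which defers to Tate's Theorem 2.2.2: abstract Pontryagin duality reduces the claim to identifying the self-dual measure, and this is pinned down by the explicit computation $\Fo_v\pr_v^0=q_v^{-k_v/2}\pr_v^{-k_v}$, which is exactly the formula \eqref{E:Fo_vpr^n} (with $n=0$) used later in the paper. Your tracking of the factors $q_v^{-k_v/2}$ and $q_v^{k_v/2}$ and the orthogonality argument on $\oo_v/\pi_v^N\oo_v$ match the intended normalization argument.
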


\begin{example}
For the field $\q=\F_q(T)$ of rational functions,
the indicator function $\pr_P^0$ of the set $\oo_P$ is self-dual for every finite valuation~$v_P$.
Indeed,
for $\xi=\sum_nx_nP^n$,
we have $\chi_P(\xi)=\chi_P(x_{-1}/P)$.
Given~\mbox{$\eta\not\in\oo_P$},
say~\mbox{$\eta=\sum_{n=-N}^\infty y_nP^n$} for some $N>0$ and $y_{-N}\neq0$,
we can find an element~$\smash{\xi=xP^{N-1}\in\oo_P}$ such that $\chi_P(\eta\xi)$ is not trivial.
Therefore,
the integral~\mbox{$\smash{\int_{\oo_P}\chi_P(\eta\xi)\,d_P\xi}$} vanishes for~$\eta\not\in\oo_P$,
and by our choice of Haar measure,
it equals~$1$ for~$\eta\in\oo_P$.
Therefore,
$\smash{\pr_P^0}$ is its own Fourier transform.

For the valuation at infinity,
we find similarly that the indicator function~$\pr_\infty^1$ of the maximal ideal~$T^{-1}\oo_\infty$ is its own Fourier
transform.
\end{example}

\subsubsection{Multiplicative Characters and Measure}

Our first insight into the structure of the multiplicative group $K_v^*$ of $K_v$ is given by the continuous homomorphism $\alpha\mapsto|\alpha|_v$ of $K_v^*$ into the multiplicative group of powers of $q_v=q^{\deg v}$.
The kernel of this homomorphism,
the subgroup of all $\alpha$ with $|\alpha|_v=1$,
 is denoted by $\oo_v^*$.
This group is compact and open.

Concerning the characters of $K_v^*$,
the situation is different from that of~$K_v^+$.
Indeed,
we are interested in all continuous multiplicative maps of $K_v^*$ into the complex numbers,
not only the bounded ones,
and shall call such a map a quasi-character.
We call a quasi-character {\em unramified\/} if it is trivial on $\oo_v^*$.
The reader may find in~\cite[Section~2.3]{Tate} a (partial) description of the characters of~$\oo_v^*$.
We will only be concerned with unramified characters.

\begin{lemma}
The unramified quasi-characters are the maps of the form
$$
|\alpha|_v^s= q_v^{-sv(\alpha)},
$$
where\/ $s$ is determined modulo\/ $2\pi i/\log q_v$.
\end{lemma}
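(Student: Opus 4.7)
The plan is to reduce the problem to classifying continuous homomorphisms from an infinite cyclic group into $\C^*$, by exploiting the quotient structure of $K_v^*$.

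First I would use the short exact sequence
$$
1\to\oo_v^*\to K_v^*\xrightarrow{v}\Z\to0,
$$
where the valuation map $v:K_v^*\to\Z$ has kernel $\oo_v^*$ (surjectivity onto $\Z$ comes from choosing a uniformizer $\pi_v$ with $v(\pi_v)=1$, as recalled earlier in Section~\ref{S: local}). Since an unramified quasi-character $\chi$ is by definition trivial on $\oo_v^*$, it factors through the quotient $K_v^*/\oo_v^*\cong\Z$, inducing a homomorphism $\widetilde\chi:\Z\to\C^*$.

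Next, any homomorphism $\Z\to\C^*$ is completely determined by its value on the generator~$1$, and conversely any nonzero complex number $c=\widetilde\chi(1)$ arises this way. Setting $c=\chi(\pi_v)$, the lifted character is
$$
\chi(\alpha)=c^{v(\alpha)}\qquad\text{for all }\alpha\in K_v^*.
$$
(Continuity of $\chi$ is automatic once it is trivial on the open subgroup~$\oo_v^*$, since then $\chi$ is constant on the cosets of~$\oo_v^*$, which are open in~$K_v^*$.)

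Finally, to match this with the stated form $|\alpha|_v^s=q_v^{-sv(\alpha)}$, I would solve $c=q_v^{-s}$ for~$s$, that is, $s=-\log c/\log q_v$. Since the complex logarithm is determined only up to integer multiples of $2\pi i$, $s$ is determined modulo $2\pi i/\log q_v$, which matches the claim. There is no real obstacle here; the only thing to be careful about is recording the ambiguity in $s$ correctly and noting that continuity of $\chi$ is handled automatically by the openness of $\oo_v^*$.
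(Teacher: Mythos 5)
Your proof is correct and is essentially the argument the paper relies on (it states the lemma without proof, following Tate's Section~2.3): an unramified quasi-character is trivial on the open subgroup $\oo_v^*$, hence factors through $K_v^*/\oo_v^*\cong\Z$ via the valuation, is determined by its value $c=\chi(\pi_v)\in\C^*$, and writing $c=q_v^{-s}$ fixes $s$ modulo $2\pi i/\log q_v$. Nothing further is needed.
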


As in \cite[Section 2.3]{Tate},
we will be able to select a Haar measure $d_v^*\alpha$ on $K_v^*$ by relating it to the measure $d_v\xi$ on $K_v^+$.
We will need a multiplicative Haar measure that gives the subgroup $\oo_v^*$ unit measure.\footnote
{This choice is different from Tate's,
who does not include the factor $q_v^{k_v/2}$.}
To this effect we choose as our standard Haar measure on $K_v^*$:
\begin{gather*}
d_v^*\alpha=\frac{q_v^{k_v/2}}{1-q_v^{-1}}\frac{d_v\alpha}{|\alpha|_v}.
\end{gather*}
Since $\oo_v=\oo_v^*\cup\pi_v\oo_v$ is a disjoint union,
we obtain

\begin{lemma}\label{L: vol Ovstar}
The measure\/ $d_v^*\alpha$ gives\/ $\oo_v^*$ unit volume.
\end{lemma}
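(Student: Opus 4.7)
The plan is to compute the additive measure of $\oo_v^*$ first, and then translate to the multiplicative measure using the definition of $d_v^*\alpha$. Since on $\oo_v^*$ we have $|\alpha|_v=1$, the conversion factor between the two measures is constant on this set, so the computation reduces to a single volume calculation.

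First I would use the disjoint decomposition $\oo_v=\oo_v^*\cup\pi_v\oo_v$ to relate $\mu_v(\oo_v^*)$ to $\mu_v(\oo_v)$, where $\mu_v$ denotes the additive Haar measure $d_v\xi$. By the normalization of $d_v\xi$ stated just before Theorem~\ref{T: K=K^} (applied with the chosen self-dual measure), we have $\mu_v(\oo_v)=q_v^{-k_v/2}$. By the scaling property $\mu_v(\alpha M)=|\alpha|_v\mu_v(M)$ applied with $\alpha=\pi_v$ and $M=\oo_v$, together with $|\pi_v|_v=q_v^{-1}$, we get $\mu_v(\pi_v\oo_v)=q_v^{-1}q_v^{-k_v/2}$. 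Subtracting gives
$$
\mu_v(\oo_v^*)=(1-q_v^{-1})q_v^{-k_v/2}.
$$

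Next I would unfold the definition of $d_v^*\alpha$. On $\oo_v^*$ the norm is identically $1$, so
$$
\int_{\oo_v^*}d_v^*\alpha=\frac{q_v^{k_v/2}}{1-q_v^{-1}}\int_{\oo_v^*}\frac{d_v\alpha}{|\alpha|_v}=\frac{q_v^{k_v/2}}{1-q_v^{-1}}\mu_v(\oo_v^*),
$$
and substituting the value of $\mu_v(\oo_v^*)$ computed above yields $1$, as required.

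There is no real obstacle here; the statement is essentially a bookkeeping check that the factor $q_v^{k_v/2}/(1-q_v^{-1})$ inserted in the definition of $d_v^*\alpha$ was chosen precisely to absorb both the volume $q_v^{-k_v/2}$ of $\oo_v$ under the self-dual additive measure and the factor $(1-q_v^{-1})$ coming from removing the maximal ideal. The only point worth emphasizing is that Tate's choice of multiplicative measure is rescaled here by $q_v^{k_v/2}$ precisely so that $\oo_v^*$ has unit volume regardless of the different exponent $k_v$; this is what makes the factor work out cleanly.
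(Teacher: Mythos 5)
Your proof is correct and follows exactly the route the paper intends: the disjoint decomposition $\oo_v=\oo_v^*\cup\pi_v\oo_v$, the normalization $\mu_v(\oo_v)=q_v^{-k_v/2}$, and the scaling property giving $\mu_v(\pi_v\oo_v)=q_v^{-1}q_v^{-k_v/2}$, after which the factor $q_v^{k_v/2}/(1-q_v^{-1})$ in the definition of $d_v^*\alpha$ yields unit volume. The paper merely states the decomposition and leaves the rest implicit, so your write-up is simply a fuller version of the same bookkeeping.
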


\subsubsection{The Local Zeta Function}
\label{S: local zeta}

The computations are as in~\cite[Section 2.5, p.~319]{Tate},
the $\pr$-adic case.
We present here only the case when the multiplicative character is unramified.

We use the notation $\pr_v^n$ to denote the indicator function of the set $\pi_v^n\oo_v$.
We have the Fourier transform
\begin{gather}\label{E:Fo_vpr^n}
\Fo_v{\pr_v^n}=q_v^{-n-k_v/2}{\pr_v^{-n-k_v}}.
\end{gather}
The local zeta function is the Mellin transform,
\begin{align}\label{E:zeta_v}
\zeta_v({\pr_v^n},s)=\int_{\pi_v^n\oo_v}|\alpha|_v^s\,d_v^*\alpha
=\sum_{k=n}^\infty q_v^{-ks}=\frac{q_v^{-ns}}{1-q_v^{-s}}.
\end{align}
Its dual is
\begin{align*}
\zeta_v(\Fo_v{\pr_v^n},s)=q_v^{-n-k_v/2}\,\frac{q_v^{(n+k_v)s}}{1-q_v^{-s}}.
\end{align*}
The local zeta function is periodic with period ${2\pi i}/\log q_v$,
hence also with period ${2\pi i}/{\log q}$,
independent of~$v$.

\subsection{The Global Theory}

We refer to \cite[Section 3]{Tate} for the theory of abstract restricted direct products.

\subsubsection{Additive Theory}

Write $\A_K$ for the ring of adeles of $K$ with measure
$$
d\x=\prod_vd_v\x_v.
$$
We define a character on $\A_K$ by
$$
\chi(\x)=\prod_v\chi_v(\x_v)
$$
for an adele $\x=(\x_v)_v$.
Since the different~$\diff_{v/P}$ is nontrivial for only finitely many~$v$,
it follows that $\A_K$ is its own character group via the identification $\x\mapsto\chi(\y\x)$ of an adele $\y$ with a character on the adeles.

The Fourier transform of an integrable function $f$ on the adeles is defined by
$$
\Fo f(\y)=\int_{\A_K}\chi(\y\x)f(\x)d\x.
$$
Note that the Fourier transform is almost an involution:
$\Fo\Fo f(\x)=f(-\x)$ for every continuous integrable function for which the Fourier transform is also integrable.\medskip

We get an additive fundamental domain $\A_K/K$ of unit volume.
Note that~$\A_K/K$ can be identified with a subgroup of $\A_K$,
unlike in the number field case (see also~\cite[Lemma~4.1.4]{Tate}).

\begin{example}
We compute $\A/\q$ for $\q=\F_q(T)$ by a procedure reminiscent of the computation of the partial fraction decomposition (see Lemma~\ref{L: partial fractions} below).
Let~$\x$ be an adele.
For each finite $P$-adic valuation,
substract a rational function of the form $f/P^k$ to cancel the denominator of~$\x$.
Thus we can substract an element of $\q$ so that each finite component of $\x$ becomes a regular function in~$\oo_P$.
Then we can still substract a polynomial from the infinite component of $\x$,
so that this component can be brought into $T^{-1}\oo_\infty$.
We find
$$
\A/\q=T^{-1}\oo_{\infty}\times\prod_{P\neq1/T}\oo_P.
$$
Since $T^{-1}\oo_{\infty}$ and each $\oo_P$ have unit volume,
$\A/\q$ has unit volume.
\end{example}

The character $\chi$ on the adeles has the global property that $\chi(\xi)=1$ for every $\xi\in K$.
This explains our particular choice of local characters.
To show this,
we first need a lemma.

\begin{lemma}\label{L: partial fractions}
A rational function\/ $f$ has a partial fraction decomposition,
\begin{gather}\label{E: partial fractions}
f(T)=\sum_{P\colon v_P(f)<0}\sum_{n=1}^{-v_P(f)}f_{P,n}(T)P^{-n}+f_{\infty}(T),
\end{gather}
where each\/ $f_{P,n}$ is a polynomial in\/ $T$ of degree less than\/ $\deg P,$
 and\/ $f_{\infty}$ is a polynomial in\/ $T$.
\end{lemma}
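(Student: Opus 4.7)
Proof proposal. I would prove this by the standard reduction: first write $f$ as a sum of terms with prime-power denominators using coprimality, then expand each numerator in base $P$ to split the degree-less-than-$\deg P$ contribution from a polynomial remainder that gets absorbed into $f_\infty$.

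First I would put $f=g/h$ in lowest terms with $h=\prod_{P} P^{e_P}$ the factorization into distinct monic irreducibles over $\F_q$, and note that by the assumption that $f=g/h$ is reduced, $e_P=-v_P(f)$ for each $P\mid h$. Since the prime powers $P^{e_P}$ are pairwise coprime in the PID $\F_q[T]$, the Chinese Remainder Theorem gives a decomposition
$$
\frac{g}{h}=\sum_{P:\,e_P\ge1}\frac{g_P}{P^{e_P}}+g_0
$$
with $g_P,g_0\in\F_q[T]$; equivalently, one proves this by induction on the number of distinct irreducible factors of $h$, using Bezout at each step. So the problem is reduced to decomposing a single term $g_P/P^{e_P}$.

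Next I would expand $g_P$ in base $P$: since $\{1,T,\ldots,T^{\deg P-1}\}$ is a basis of $\F_q[T]/(P)$ as an $\F_q$-module, iterated division by $P$ yields a unique finite expansion
$$
g_P=\sum_{n=0}^{N} a_n P^n,\qquad a_n\in\F_q[T],\ \deg a_n<\deg P.
$$
Dividing by $P^{e_P}$ and separating the negative- and nonnegative-power parts,
$$
\frac{g_P}{P^{e_P}}=\sum_{n=1}^{e_P}\frac{a_{e_P-n}}{P^{n}}+\sum_{n\ge e_P}a_n P^{n-e_P},
$$
so that taking $f_{P,n}:=a_{e_P-n}$ gives the required coefficients with $\deg f_{P,n}<\deg P$, and the second sum is a polynomial in $T$.

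Finally I would collect the polynomial remainders: $f_\infty(T)$ is $g_0$ plus the sum over $P$ of the polynomial parts $\sum_{n\ge e_P}a_nP^{n-e_P}$. This is a finite sum of polynomials in $T$, hence a polynomial, completing the decomposition (\ref{E: partial fractions}). The only point that requires a word of care is the base-$P$ expansion of a polynomial with coefficients of bounded degree, which is a straightforward application of the Euclidean algorithm in $\F_q[T]$; no essential obstacle arises, and in particular the argument is entirely parallel to the rational partial-fraction decomposition over $\Q$.
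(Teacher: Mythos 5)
Your argument is correct, but it follows a different route from the paper's. You first isolate the prime-power denominators globally, via the Chinese Remainder Theorem (or Bezout, inducting on the number of distinct irreducible factors), writing $f=\sum_P g_P/P^{e_P}+g_0$, and then read off the coefficients $f_{P,n}$ from the base-$P$ (iterated Euclidean) expansion of each numerator $g_P$, absorbing the leftover polynomial parts into $f_\infty$. The paper never invokes coprimality: for each $P$ dividing the denominator, with exponent $n$, it reduces $P^nf$ modulo $P$ to obtain a polynomial $f_{P,n}$ of degree less than $\deg P$, subtracts $f_{P,n}P^{-n}$, and observes that this strictly lowers the power of $P$ in the denominator while leaving the other primes untouched; iterating until the denominator is trivial leaves the polynomial $f_\infty$. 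In effect the paper computes the principal part of $f$ at each $P$ term by term by a local, greedy subtraction, whereas you assemble the same principal parts after a global splitting. Your version makes the structure more transparent (the $P$-adic digits $a_n$ are unique, and the separation into principal parts plus a polynomial is immediate), at the cost of the extra CRT/Bezout input; the paper's version is more economical, needing only reduction modulo $P$ and the remark that each subtraction creates no new poles. Both proofs are complete, and your handling of the polynomial remainders in $f_\infty$ is the right way to close the CRT route.
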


\begin{proof}
Let $P$ be an irreducible polynomial such that $v_P(f)<0$.
Thus,~$P$ is a factor of the denominator of $f$.
Let $n$ be the exponent of $P$ in the denominator.
Compute the class of $P^nf$ modulo $P$.
Thus we find a polynomial~$f_{P,n}$ of degree less than $\deg P$ such that $f-f_{P,n}P^{-n}$ has the same factors in its
denominator as~$f$,
to the same power,
except that $P$ occurs to a lower power.
We continue until
$$
f-\sum f_{P,n}P^{-n}
$$
 has a trivial denominator,
and hence is a polynomial $f_\infty(T)$.
\end{proof}

\begin{lemma}
For every\/ $\xi\in K$ we have $\chi(\xi)=1$.
\end{lemma}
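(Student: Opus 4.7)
My plan is to first reduce the statement to the rational function field $\q = \F_q(T)$, then recognize the resulting product $\prod_P \chi_P$ as the exponential of a trace of a sum of residues, and conclude via the classical vanishing of the sum of residues of a rational function on $\P^1$.

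For $\xi \in K$, I group the local factors of $\chi(\xi)$ by the restriction $v_P$ of each valuation $v$ of $K$ to $\q$. Since $\chi_v(\xi) = \chi_P(\Tr_{v/P}(\xi))$ by construction of $\chi_v$, and since the local traces over the valuations above $P$ sum to the global trace $\Tr_{K/\q}$, I obtain
$$
\chi(\xi) = \prod_P \chi_P\Bigl(\sum_{v\mid P}\Tr_{v/P}(\xi)\Bigr) = \prod_P \chi_P(\Tr_{K/\q}(\xi)).
$$
Because $\Tr_{K/\q}(\xi) \in \q$, the lemma for $K$ reduces to its analogue for $\q$ applied to an arbitrary $g \in \q$.

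For $g \in \q$, the residue description of $\chi_P$ recorded in the remark preceding the definition of $\res_P$ applies uniformly to irreducible $P$ and to $P = 1/T$, giving
$$
\prod_P \chi_P(g) = \exp\Bigl(\frac{2\pi i}{p}\Tr_{\F_q/\F_p}\Bigl(\sum_P \res_P(g)\Bigr)\Bigr).
$$
Hence it suffices to show that $\sum_P \res_P(g) = 0$ for every rational function $g$, the sum running over all closed points of $\P^1_{\F_q}$ (the irreducible monic polynomials in $T$ together with $\infty$).

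By additivity and Lemma \ref{L: partial fractions}, this residue identity need only be checked on a polynomial $g_\infty(T) \in \F_q[T]$ and on a basic fraction $g = aT^kP^{-n}$ with $a\in\F_q$, $n\geq 1$, and $0\leq k<\deg P$. Polynomials are regular at every finite valuation and have no $T^{-1}$ term at infinity, so all their residues vanish. A basic fraction is regular away from $P$ and $\infty$, and Lemma \ref{L: trace=delta} yields $\res_P(aT^kP^{-n}) = a\,\delta_{k,\deg P-1}\,\delta_{n,1}$. The main obstacle is the matching identity $\res_\infty(aT^kP^{-n}) = -a\,\delta_{k,\deg P-1}\,\delta_{n,1}$. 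I would handle this by lifting $P$ to a polynomial $\widetilde P$ over a number field as in the proof of Lemma \ref{L: trace=delta}, and applying the contour-integral argument of Lemma \ref{L: residues} to $z^k\widetilde P(z)^{-n}$ (extended to $n\geq 2$ by the remark following that lemma): the integral of $z^k\widetilde P(z)^{-n}\,dz/(2\pi i)$ over a large circle simultaneously equals the sum of finite residues, by the residue theorem, and the coefficient of $z^{-1}$ in the Laurent expansion at infinity, by expanding the integrand for large $|z|$. Reducing modulo the prime $\pr$ then yields, in view of the sign convention $\res_\infty(\xi) = -a_1$, the required cancellation $\res_P(g) + \res_\infty(g) = 0$.
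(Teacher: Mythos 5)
Your proposal is correct and follows essentially the same route as the paper: reduce to $\q=\F_q(T)$ via $\sum_{v\mid P}\Tr_{v/P}=\Tr_{K/\q}$, then use the partial fraction decomposition of Lemma \ref{L: partial fractions} and the cancellation of the contributions at $P$ and at $\infty$ (built on Lemma \ref{L: trace=delta}), which is exactly the paper's term-by-term computation recast as the vanishing of the total residue over all places of $\P^1$. The only divergence is your justification of $\res_\infty(aT^{\deg P-1}P^{-1})=-a$ by lifting to characteristic zero and contour integration; this works but is more than needed, since for monic $P$ one has $aT^{\deg P-1}/P=a/T+O(T^{-2})$ directly, which is all the paper uses.
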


\begin{proof}
We prove this directly for $K=\q$.
It then follows in general from the definition of $\chi_v$.

Let $\xi\in\q$ be a rational function,
and compute the partial fraction decomposition of $\xi$.
We show that each term in~\eqref{E: partial fractions} contributes trivially to~$\chi(\xi)$.

First,
for an irreducible polynomial $Q$,
$\chi_Q(f_\infty)=1$.
Also $\chi_\infty(f_\infty)=1$ since $f_\infty$ does not have a $1/T$-term.
Hence $\chi(f_\infty)=1$.

Let
$$
f_{P,n}P^{-n}
$$
 be a term in the partial fraction decomposition of $\xi$ and assume~$n\geq2$.
Then $\chi_Q(f_{P,n}P^{-n})=1$ for $Q\neq P$ and for $Q=P$ since~$n\geq2$.
Also \mbox{$\chi_\infty(f_{P,n}P^{-n})=1$},
since the degree of $f_{P,n}P^{-n}$ is at most $\deg P-1-n\deg P\leq-2$.

Finally,
a term $f_{P,1}/P$ does not contribute in $\chi_Q$ for every irreducible polynomial $Q\neq P$.
Assume that~$P$ is monic and write
$$
f_{P,1}(T)=a_0+a_1T+\dots+a_{d-1}T^{d-1},
$$
where~$d=\deg P$ and $a_i\in\F_q$.
The contribution of $\chi_P$ is obtained from the trace over~$\F_p$ of $a_{d-1}$.
Further,
for $i\leq d-2$,
 $\chi_\infty(a_iT^i/P)=1$ because this rational function has degree $\leq-2$.
Also,~\mbox{$a_{d-1}T^{d-1}/P=a_{d-1}/T+O(T^{-2})$},
since~$P$ is monic.
Hence the contribution of $\chi_\infty$ is obtained from the trace over $\F_p$ of $-a_{d-1}$,
which cancels the contribution of $\chi_P$.
It follows that~\mbox{$\chi(\xi)=1$}.
\end{proof}

As in~\cite[Theorem~4.1.4]{Tate},
it follows that the group of characters that are trivial on $K$ is $K^\perp=K$.

\subsubsection{The Theorem of Riemann--Roch}
\label{S: RR}

From the Poisson Summation Formula~\cite[Lemma~4.2.4]{Tate},
we deduce the theorem of Riemann--Roch~\cite[Theorem~4.2.1]{Tate},
\begin{gather*}
\sum_{\xi\in K}f(\xi\gota)=\frac1{|\gota|}\sum_{\xi\in K}\Fo f(\xi/\gota),
\end{gather*}
where the {\em norm\/} of the idele $\gota$ is defined by
\begin{gather}\label{E:norm}
|\gota|=\prod_v|\gota_v|_v.
\end{gather}

\begin{definition}\label{D: E}
We define the {\em average\/} of\/ $f$ by
\begin{gather*}
Ef(\gota)=\sqrt{|\gota|}\sum_{\alpha\in K^*}f(\alpha\gota).
\end{gather*}
\end{definition}

More precisely,
 $E$ is the combination of a restriction (to the ideles) and a trace (the sum over $K^*$) \cite{Connespersonal}.
It is not a true average over $K^*$,
because this set is infinite.
In terms of the average,
the theorem of Riemann--Roch reads
\begin{gather}\label{E: Poisson E}
Ef(\gota)+f(0){|\gota|^{1/2}}=E(\Fo f)(1/\gota)+\Fo f(0)|\gota|^{-1/2}.
\end{gather}

\begin{remark}\label{R:F is involution}
Clearly,
$E(\Fo\Fo f)=Ef$,
so that the Fourier transform induces an involution on the image of the averaging map,
even though it is not itself an involution.
This is because the averages of $f(\x)$ and $f(-\x)$ coincide.
In general,
 the averages of $f(\x)$ and $f(\alpha\x)$ coincide,
for any $\alpha\in K^*$.
Thus the average of~$f$ only depends on the class of the idele $\gota$ in $\A^*/K^*$.
If $f$ only depends on the associated divisor,
then $Ef(\calD)$ depends only on the class of $\calD$ and on its degree.
\end{remark}

A {\em divisor\/} is a formal sum $\calD=\sum_v\calD_vv$,
with $\calD_v\in\Z$.
The divisor is called {\em positive,\/}
$$
\calD\geq0,
$$
if all coefficients $\calD_v$ are positive.
The {\em degree\/} of $\calD$ is
$$
\deg\calD=\sum_v\calD_v\deg v.
$$
An idele $\gota$ corresponds to a divisor,
$$
(\gota)=\sum_vv(\gota_v)v.
$$
The degree of an idele satisfies $|\gota|=q^{-\deg(\gota)}$ by \eqref{E:norm}.
A function $\alpha\in K^*$ gives a {\em principal divisor\/}
$$
(\alpha)=\sum_vv(\alpha)v.
$$

\begin{remark}\label{R:zeros=poles}
See~[\citen{Weil}, Theorem~IV.4.5] or~\cite[Lemma~7]{Iwasawa} for a proof that $|\alpha|=1$.
That is,
a function has as many zeros as poles.
\end{remark}

To see the geometric significance of the theorem of Riemann--Roch,
take
\begin{gather}\label{E:special f}
f(\x)=\prod_v \pr_v^0(\x_v),
\end{gather}
where $\pr_v^0$ denotes the indicator function of $\oo_v$.
Let $\calD=\sum_v\calD_vv$ be a divisor and let $\gota_\calD=\left(\pi_v^{\calD_v}\right)_v$ be a corresponding idele.
Then
\begin{gather}\label{E:count}
f(\xi\gota_\calD)=\prod_v\pr_v^0(\pi_v^{v(\xi)+\calD_v})
=\begin{cases}1&\text{if }(\xi)+\calD\geq0\\0&\text{otherwise.}\end{cases}
\end{gather}
The functions~\mbox{$\xi\in K$} with $(\xi)+\calD\geq0$ form a vector space over $\F_q$,
$$
L(\calD)=\{\xi\in K\colon\xi=0\text{ or }(\xi)+\calD\geq0\}.
$$
It is the vector space of functions having pole divisor bounded by $\calD$.
We denote its dimension over $\F_q$ by $l(\calD)$,
\begin{gather*}
l(\calD)={\F_q}\text{-}\dim L(\calD).
\end{gather*}
By~\eqref{E:count},
\begin{gather}\label{E:sum=l(D)}
\sum_{\xi\in K}f(\xi\gota_\calD)=q^{l(\calD)}.
\end{gather}

Let the {\em canonical divisor\/} of $\Curve$ be the divisor
$$
\calK=\sum_vk_vv.
$$
We define the {\em genus\/} of~$\Curve$ (or of the function field $K$) by
\begin{gather*}
g=1+{\frac12}{\deg\calK}=1+{\frac12}\sum_vk_v\deg v.
\end{gather*}
Hence $\deg\calK=2g-2$.
By Equation~\eqref{E:Fo_vpr^n},
$
\Fo_v \pr_v^0=q_v^{-k_v/2}\pr_v^{-k_v}
$.
Thus we find that
$$
\Fo f(\x)=q^{1-g}\prod_v{\pr_v^{-k_v}}(\x_v).
$$
Therefore
$$
\Fo f(\xi/\gota)=\begin{cases}
q^{1-g}	&\text{if }(\xi)-\calD\geq-\calK\\
0	&\text{otherwise.}
\end{cases}
$$
Hence $\sum_{\xi\in K}\Fo f(\xi/\gota)=q^{1-g}q^{l(\calK-\calD)}$.
Since $1/|\gota|=q^{\deg\calD}$,
we obtain the theorem of Riemann--Roch in the classical formulation:

\begin{theorem}\label{T: RR}
Let\/ $\Curve$ be a curve of genus\/ $g$ with field of constant functions\/ $\F_q$.
Let\/~$\calK$ be a canonical divisor on\/ $\Curve$ and let\/ $\calD$ be a divisor.
The vector space over\/ $\F_q$ of functions\/ $f$ such that\/ $(f)+\calD$ is positive has dimension\/ $l(\calD)$ given by
\begin{gather*}
l(\calD)=\deg \calD+1-g+l(\calK-\calD).
\end{gather*}
\end{theorem}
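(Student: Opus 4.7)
The plan is to combine the computations already carried out in the paragraphs immediately preceding the statement. The genuine content has been pre-assembled, so the proof amounts to substituting the special test function $f(\x) = \prod_v \pr_v^0(\x_v)$ from \eqref{E:special f} into the adelic Poisson summation identity
$$
\sum_{\xi \in K} f(\xi \gota) = \frac{1}{|\gota|}\sum_{\xi \in K} \Fo f(\xi / \gota),
$$
taking $\gota = \gota_\calD = (\pi_v^{\calD_v})_v$ to be a chosen idele representative of the divisor $\calD$.

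For the left-hand side, \eqref{E:count} identifies $f(\xi \gota_\calD)$ with the indicator of the condition $\xi \in L(\calD)$, so \eqref{E:sum=l(D)} gives $\sum_{\xi \in K} f(\xi \gota_\calD) = q^{l(\calD)}$. For the right-hand side I would compute $\Fo f$ place by place using \eqref{E:Fo_vpr^n} with $n = 0$, which gives $\Fo_v \pr_v^0 = q_v^{-k_v/2}\pr_v^{-k_v}$. Because $k_v$ vanishes at all but finitely many places (the different is nontrivial only at finitely many $v$), the infinite product is legitimate, and the scalar prefactors collapse to
$$
\prod_v q_v^{-k_v/2} = q^{-\tfrac{1}{2}\sum_v k_v \deg v} = q^{-\deg\calK/2} = q^{1-g},
$$
by the definition of the genus. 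Hence $\Fo f(\x) = q^{1-g}\prod_v \pr_v^{-k_v}(\x_v)$, and the same indicator argument as for $L(\calD)$ shows that $\Fo f(\xi / \gota_\calD)$ equals $q^{1-g}$ exactly when $v(\xi) - \calD_v \geq -k_v$ for every $v$, i.e.\ when $\xi \in L(\calK - \calD)$, and vanishes otherwise. Summing, $\sum_\xi \Fo f(\xi/\gota_\calD) = q^{1-g}\, q^{l(\calK - \calD)}$.

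Finally, using $1/|\gota_\calD| = q^{\deg \calD}$ from \eqref{E:norm}, the Poisson identity becomes
$$
q^{l(\calD)} = q^{\deg \calD + 1 - g + l(\calK - \calD)},
$$
and taking $\log_q$ yields the theorem. I expect no real obstacle: the Poisson identity itself is the deep input, inherited from Tate's thesis, and what remains is bookkeeping that recognizes the Fourier transform as sending the indicator of $\oo_v$ to (a scaled indicator of) its Serre dual $\pi_v^{-k_v}\oo_v$, with the scaling producing exactly the Euler characteristic $q^{1-g}$. The one step worth double-checking is the identification of $\{\xi \in K : v(\xi) - \calD_v \geq -k_v \text{ for all } v\}$ with $L(\calK - \calD)$, which is immediate from $\calK = \sum_v k_v v$ and the defining inequality of $L(\cdot)$.
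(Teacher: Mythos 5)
Your proposal is correct and is essentially identical to the paper's own proof: the paper also specializes the adelic Poisson summation identity to $f=\prod_v\pr_v^0$ and the idele $\gota_\calD$, uses \eqref{E:count}, \eqref{E:sum=l(D)} and \eqref{E:Fo_vpr^n} to get $q^{l(\calD)}$ on one side and $q^{\deg\calD}q^{1-g}q^{l(\calK-\calD)}$ on the other, and concludes by comparing exponents. All the details you flag (the prefactor $q^{1-g}$ from $\deg\calK=2g-2$ and the identification of the dual support with $L(\calK-\calD)$) are handled exactly the same way there.
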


It follows that $g$ is an integer (hence the degree of $\calK$ is even).
For $\calD=0$ we have $L(0)=\F_q$ and hence $l(0)=1$.
Putting $\calD=0$ in the Riemann--Roch formula,
we find $g=l(\calK)$,
so that $g\geq0$.
See \cite{Artin,Stichtenoth} for the connection with differentials on $\Curve$.

\begin{example}
By the computation of $k_v$ for $\q$,
we see that the canonical divisor of $\P^1$ has degree $-2$,
hence $\P^1$ has genus $0$.
\end{example}

\begin{example}
A curve of genus $1$ always has a rational point.
Indeed,
by Theorem~\ref{T: degree one} below,
we can find a divisor $\calD$ of degree $1$.
By Riemann--Roch,
$l(\calD)=1$,
hence $\calD$ is linearly equivalent (see Definition~\ref{D:lin eq} below) to a positive divisor of degree~$1$,
which is a point on $\Curve$.

Similarly,
a curve of genus $0$ always has a rational point.
\end{example}

\subsubsection{Multiplicative Theory}
\label{S:multiplicative theory}

The multiplicative group of $\A_K$ is $\A^*_K$,
the group of {\em ideles}.
It is the multiplicative group of vectors $(\gota_v)$,
$\gota_v\in K_v^*$,
such that $\gota_v\in\oo_v^*$ for almost all~$v$.
It contains the subgroup
$$
\A^*_0=\ker|\cdot|
$$
of ideles of degree zero.
$\A^*_0$ is a refinement of the group of divisors of degree zero,
and $\A^*_0/K^*$ is a refinement of the ideal class group of $K$.
By Lemma~\ref{L: vol Ovstar},
the measure
$$
d^*\gota=\prod_v d^*_v\gota_v
$$
on $\A^*$ gives the subgroup $\prod_v\oo_v^*$ of $\A_0^*$ unit measure.

\begin{definition}\label{D:lin eq}
Two divisors $\calD$ and $\calD'$ are {\em linearly equivalent\/} if $\calD-\calD'$ is the divisor of a function on\/ $\Curve$.
\end{definition}

Thus $\calD+(\alpha)$ gives all divisors equivalent to $\calD$,
for $\alpha\in K^*$.
By Remark \ref{R:zeros=poles},
 linearly equivalent divisors have the same degree.
We write~\mbox{$\Pic(n)$} for the set of linear equivalence classes of divisors of degree~$n$.
Thus~\mbox{$\Pic(0)$} is a group,
the {\em divisor class group\/} of $\Curve$.
Let $\calP$ be a divisor of degree~$n$.
Then $\calD\mapsto\calP+\calD$ gives a bijection between the classes of degree~$0$ and of degree~$n$.
Thus if $\Pic(n)$ is not empty,
then the number of classes in~$\Pic(n)$ equals that of $\Pic(0)$.
We will see in Theorem~\ref{T: degree one} below that there exist divisors of every degree,
so $\Pic(n)$ is never empty.
We write
\begin{gather*}
h=|\Pic(0)|
\end{gather*}
for the {\em class number\/} of $\Curve$ over $\F_q$.

\begin{example}\label{E:P1}
Let $\Curve$ be a curve of genus $0$ and let $\calD$ be a divisor of degree zero.
By the Riemann--Roch formula,
$l(\calD)\geq1$,
hence there exists a function~$f$ such that $(f)+\calD\geq0$.
Since the degree of this divisor is zero,
we have that~$\calD=(1/f)$.
This shows that $h=1$.

In general,
let $\calD$ be a divisor of degree zero such that $l(\calD)\geq1$.
Then there exists a function $f$ such that $\calD+(f)\geq0$,
so that $\calD$ is linearly equivalent to the trivial divisor.
Then $l(\calD)=1$,
since nonconstant functions always have poles.
On the other hand,
 $l(\calD)=0$ for every nontrivial divisor class of degree zero.
\end{example}

\begin{theorem}
The number of linear equivalence classes in each degree is finite.
\end{theorem}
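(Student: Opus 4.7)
The plan is to apply Theorem~\ref{T: RR} to reduce the question to counting positive divisors of a fixed large degree, and then to invoke the finiteness of $\Curve(\F_{q^d})$ for each $d$.

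Fix $n\in\Z$ and assume $\Pic(n)$ is non-empty, so some divisor $\calD$ of degree $n$ exists. I would pick any valuation $v_0$ of $K$ and an integer $N$ large enough that $d:=n+N\deg v_0>2g-2$. Then $\calK-\calD-N v_0$ has negative degree, so by Remark~\ref{R:zeros=poles} (a function has as many zeros as poles) we have $l(\calK-\calD-N v_0)=0$, and Theorem~\ref{T: RR} yields
\begin{gather*}
l(\calD+N v_0)=d+1-g\geq1.
\end{gather*}
Choosing a nonzero $\xi\in L(\calD+Nv_0)$ and setting $\mathcal{E}=(\xi)+\calD+Nv_0$, I obtain a positive divisor of degree $d$ linearly equivalent to $\calD+Nv_0$. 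Thus every class in $\Pic(n)$ is represented by $\mathcal{E}-Nv_0$ for some positive divisor $\mathcal{E}$ of degree $d$.

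It then suffices to show that only finitely many positive divisors of degree $d$ exist. Any such divisor $\sum_v a_v v$ is supported on the valuations with $\deg v\leq d$, and each coefficient is at most $d$. Under the description of valuations as Frobenius orbits on $\Curve(\Fpa)$ from Section~\ref{S:zeta}, a valuation of degree $\leq d$ contains a point of $\Curve(\F_{q^d})$, and $\Curve(\F_{q^d})$ is finite, being cut out by polynomial equations in a finite power of the finite field $\F_{q^d}$. Hence only finitely many valuations of degree $\leq d$ exist, and so only finitely many positive divisors of degree $d$. The only substantive input is Riemann--Roch; no real obstacle arises once one commits to multiplying $\calD$ by $Nv_0$ to push the degree above the Riemann--Roch threshold.
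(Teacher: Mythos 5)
Your argument is correct and is essentially the paper's own: push the degree above the Riemann--Roch threshold so each class is represented by a positive divisor, then use that only finitely many valuations have bounded degree (the paper does the shift via the bijection $\Pic(n)\cong\Pic(0)$ and works with degree $\geq g$, which is the same device as your $\calD\mapsto\calD+Nv_0$). Two cosmetic slips worth fixing: in genus $0$ you also need $d\geq g$, so take $N$ a bit larger than the stated condition $d>2g-2$ requires; and a valuation of degree $e\leq d$ gives points of $\Curve(\F_{q^e})$, which lies in $\F_{q^d}$ only when $e\mid d$ --- finiteness of the valuations of degree at most $d$ still follows by counting within $\Curve(\F_{q^e})$ for each $e\leq d$ separately.
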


\begin{proof}
For $\deg\calD\geq g$ we have $l(\calD)\geq1+l(\calK-\calD)\geq1$.
Hence there exists a function $\alpha$ such that $\calD+(\alpha)\geq0$.
It follows that the class of a divisor of degree~$\geq g$ is represented by a positive divisor.
Since there are only finitely many valuations of each degree,
the number of elements of $\Pic(n)$ is finite for $n\geq g$.
But $|\Pic(0)|=|\Pic(n)|$ if $\Pic(n)$ is not empty,
so $h$ is finite.
\end{proof}

The volume of the group of idele classes $\A^*_0/K^*$ is found as follows:
Choose ideles $\cid_1,\dots,\cid_h$ of degree zero representing each class.
Given an idele of degree $0$,
we can first divide by a $\cid_i$ to make its class trivial.
Then we can divide by a function to make it a unit everywhere,
and finally,
we can divide by a constant function in~$\F_q^*$ to make the value of this unit $1$ at one fixed valuation of degree $1$,
or,
if such a valuation does not exist,
we can normalize the idele in a similar manner using $\F_q^*$.
The group~$\prod_v\oo_v^*$ has unit volume.
Also,
$\F_q^*$ contains $q-1$ elements.
Hence the volume of~$\A^*_0/K^*$,
denoted $\kappa$,
 is given by
\begin{gather}\label{E: kappa}
\kappa=\vol(\A^*_0/K^*)=\frac{h}{q-1}.
\end{gather}
Unlike the number field case,
$\A^*_0/K^*$ can be identified with a subgroup of~$\A^*_0$,
even if there is more than one valuation above infinity.
(See also~\cite[Section~4.3]{Tate}.)

\subsubsection{The Zeta Function}

As in \cite{Tate},
for a function $f$ on the adeles such that $f$ and $\Fo f$ are of fast decay as $|\gota|\to\infty$,
the zeta function is defined for $\Re s>1$ by
\begin{gather}\label{E: zeta_C=int}
\zeta_\Curve(f,s)=\int_{\A^*}f(\gota)|\gota|^s\,d^*\gota.
\end{gather}

This definition is not natural,
since we integrate the additive function $f$ over the multiplicative group $\A^*$.
However,
by summing over $K^*$,
we obtain in terms of the average,\footnote
{This formula is the analogue of the expression
$$
\zeta_\Z(s)=\int_0^\infty(\theta(t)-1)t^s\,\frac{dt}t
$$
for the Riemann zeta function,
where $\theta(t)$ is defined after \eqref{E: theta flip}.}
\begin{gather}\label{E: zeta_C=E}
\zeta_\Curve(f,s)=\int_{\A^*/K^*}Ef(\gota)|\gota|^{s-1/2}\,d^*\gota.
\end{gather}
In this sense,
$\zeta_\Curve(f,s+1/2)$ is the Mellin transform of $Ef$,
completely naturally.

In case $f=\prod_vf_v$ is a product of local functions,
we find the Euler product of~$\zeta_\Curve$ by writing the integral over the ideles as a product of local integrals,
\begin{gather}\label{E:Euler product}
\zeta_\Curve(f,s)=\prod_v\zeta_v(f_v,s).
\end{gather}
The local factors have been computed in Section~\ref{S: local zeta} for $f_v=\pr_v^n$.
If $f(\gota)$ only depends on the divisor associated with $\gota$,
then we obtain,
 by summing over cosets of~$\A^*/\prod_v\oo_v^*$,
\begin{gather}\label{E:zeta sum D}
\zeta_\Curve(f,s)=\sum_{\calD}f(\calD)q^{-s\deg\calD},
\end{gather}
where $f(\calD)=f((\pi_v^{\calD_v})_v)$.\medskip

We define the {\em zeta function\/} of $\Curve$ by
$$
\zeta_\Curve(s)=q^{(g-1)s}\zeta_\Curve(f,s),
$$
for the special choice\footnote
{By Theorem~\ref{T: degree one},
we could also take
$
f=\prod_{v}\pr_v^{a_v},
$
where the exponents $a_v$ are chosen so that $\sum_va_v\deg v=1-g$,
to define $\zeta_\Curve(s)=\zeta_\Curve(f,s)$,
without the need of the factor $q^{(g-1)s}$.} $f=\prod_v\pr_v^0$ as in \eqref{E:special f}.
The factor $q^{(g-1)s}$ has been inserted so that the functional equation is self-dual,
see Section~\ref{S:two-variable}.
By~\eqref{E:zeta sum D},
this function can be written for $\Re s>1$ as a sum over positive divisors,
$$
\zeta_\Curve(s)=q^{(g-1)s}\sum_{\calD\geq0}q^{-s\deg\calD}.
$$
This is the analogue of~\eqref{D:Rzeta}.
By~\eqref{E:Euler product},
we obtain the Euler product over all valuations,
 analogous to \eqref{E: Euler prod},
\begin{gather}\label{E: Euler zetaC}
\zeta_\Curve(s)=q^{(g-1)s}\prod_v\frac1{1-q_v^{-s}}.
\end{gather}

To compute this function,
we use~\eqref{E: zeta_C=E}.
By~\eqref{E:sum=l(D)},
we obtain
\begin{gather}\label{E:zetaC Cl}
\zeta_\Curve(s)=q^{(g-1)s}\sum_{n=0}^\infty q^{-ns}\sum_{\calD\in\Pic(n)}\frac{q^{l(\calD)}-1}{q-1}.
\end{gather}
Let $\mu$ be the minimal positive degree of a divisor.
Then $\Pic(n)$ is empty if $n$ is not a multiple of $\mu$,
hence \eqref{E:zetaC Cl} only contains terms with $n$ a multiple of $\mu$.
For a divisor $\calD$ of degree $n\geq2g-1$,
$l(\calD)=n+1-g$.
Hence the infinite series \eqref{E:zetaC Cl} becomes geometric.
We thus find that $\zeta_\Curve$ has simple poles at the values for $s$ such that $q^{\mu s}=q^\mu$ and at values such that $q^{\mu s}=1$.
To finish the computation,
we first show that there exists a divisor of degree~$1$ (see \cite{Stichtenoth} or \cite[V.5.3]{Eichler}).

\subsubsection*{Constant Field Extensions}

Recall that $\F_q$ is the field of constants of $K$,
i.e.,
$\F_q$ is algebraically closed in $K$.
We denote the constant field extension of degree $n$ by
$$
K_n=K[X]/(m),
$$
where~$m$ is an irreducible polynomial over $\F_q$ of degree $n$.
The corresponding zeta function (without the $q^{(g-1)s}$ factor) is defined by its Euler product over all valuations $w$ of $K_n$,
\begin{gather}\label{E: zeta q^n}
\zeta_{\Curve/\F_{q^n}}(s)=\prod_w\frac1{1-q_w^{-s}}.
\end{gather}

The Frobenius automorphism of $\F_{q^n}$ is $\phi_q^n$,
hence the Frobenius flow of $\smash{\Curve/\F_{q^n}}$ is generated by $\phi_q^n$.
Let $w$ be a valuation of $K_n$.
Its restriction to~$K$,
say $v$,
 corresponds to an orbit of $\phi_q$ of $\deg v$ points in $\smash{\Curve(\Fpa)}$.
Under $\phi_q^n$,
this orbit splits into $d$ orbits,
each of $\deg (v)/d$ points,
where
$$
d=\gcd(n,\deg v).
$$
Hence $\deg w=\deg(v)/d$,
i.e.,
 $K_n(w)$ has degree $\deg(v)/d$ over $\F_{q^n}$,
and hence degree $n/d$ over $K(v)$.
This means that
$$
q_w=q_v^{n/d}.
$$
Moreover,
there are $d$ different valuations of $K_n$ that restrict to $v$.
We find that the local factor corresponding to $v$ in the Euler product for $\zeta_{\Curve/\F_{q^n}}(s)$ is given by
$$
\prod_{w|v}\frac1{1-q_w^{-s}}=\frac1{\left(1-q_v^{-ns/d}\right)^{d}}.
$$

Now $\deg(v)/d$ is relatively prime to $n/d$.
Hence $e^{2\pi i\deg(v)/n}$ is a primitive $n/d$-th root of unity.
We deduce that
$$
\bigl(1-X^{n/d}\bigr)^d=\prod_{k=1}^n\bigl(1-e^{2\pi ik\deg (v)/n}X\bigr).
$$
For $X=q_v^{-s}$,
we obtain the following lemma.

\begin{lemma}\label{L: zeta q^n}
Let\/ $\zeta_{\Curve/\F_{q^n}}(s)$ be the zeta function of\/ $K_n,$
as in~\eqref{E: zeta q^n}.
Then
$$
\zeta_{\Curve/\F_{q^n}}(s)=\prod_{k=1}^n\zeta_{\Curve/\F_q}\biggl(s+k\frac{2\pi i}{n\log q}\biggr).
$$
\end{lemma}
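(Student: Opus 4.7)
The plan is to verify the identity factor-by-factor over the valuations $v$ of $K$. Both sides of the claimed equation (setting aside the prefactor $q^{(g-1)s}$, which in any case only produces a unimodular constant when multiplied over $k=1,\dots,n$) decompose as Euler products naturally indexed by $v$, so it suffices to establish that for each $v$,
$$
\prod_{w\mid v}\frac1{1-q_w^{-s}}
=\prod_{k=1}^n\frac1{1-q_v^{-s-k\cdot 2\pi i/(n\log q)}}.
$$

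For the left-hand side I would invoke the computation already carried out in the paragraph preceding the lemma: with $d=\gcd(n,\deg v)$, there are exactly $d$ valuations $w$ of $K_n$ above $v$, and each of them satisfies $q_w=q_v^{n/d}$, so the local contribution equals $\bigl(1-q_v^{-ns/d}\bigr)^{-d}$.

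For the right-hand side, the key observation is that $q_v^{k\cdot 2\pi i/(n\log q)}=\exp(2\pi ik\deg v/n)$, since $\log q_v=(\deg v)\log q$. Writing $\zeta_n=e^{2\pi i/n}$, the local factor at $v$ becomes $\prod_{k=1}^n(1-\zeta_n^{-k\deg v}q_v^{-s})^{-1}$. As $k$ runs through $1,\dots,n$, the residues $k\deg v\bmod n$ cycle through the multiples of $d$ in $\Z/n\Z$, each appearing exactly $d$ times, so $\zeta_n^{-k\deg v}$ enumerates the $(n/d)$-th roots of unity with multiplicity $d$. This yields the cyclotomic identity
$$
\prod_{k=1}^n\bigl(1-\zeta_n^{-k\deg v}X\bigr)=\bigl(1-X^{n/d}\bigr)^{d},
$$
and substituting $X=q_v^{-s}$ produces exactly $\bigl(1-q_v^{-ns/d}\bigr)^{d}$, matching the left-hand side. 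Taking the product over all $v$ finishes the argument.

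The main (and only genuinely new) step is the cyclotomic bookkeeping that identifies the multiset $\{k\deg v\bmod n:1\le k\le n\}$ with each multiple of $d$ repeated $d$ times; the rest is just assembling pieces that are already in place from the preceding computation of $\prod_{w\mid v}(1-q_w^{-s})^{-1}$ and from the definitions of $q_v$ and $\log q_v$.
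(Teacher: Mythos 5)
Your proof is correct and follows essentially the same route as the paper: the local splitting data $d=\gcd(n,\deg v)$, $q_w=q_v^{n/d}$, with $d$ valuations $w$ above $v$, from the paragraph preceding the lemma, combined with the cyclotomic identity $\prod_{k=1}^n\bigl(1-e^{\pm2\pi ik\deg(v)/n}X\bigr)=\bigl(1-X^{n/d}\bigr)^{d}$ evaluated at $X=q_v^{-s}$. One small quibble: your parenthetical about the prefactor is imprecise, since $\prod_{k=1}^n q^{(g-1)(s+2\pi ik/(n\log q))}=q^{n(g-1)s}$ times a unimodular constant and is therefore not itself merely unimodular; but this is harmless because both sides of the lemma are to be read as the bare Euler products of \eqref{E: zeta q^n}, exactly as you in fact treat them.
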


Consider $\Curve$ again over $\F_q$,
and let $\mu$ be the minimal positive degree of a divisor.
In particular,
the degree of every valuation is a multiple of~$\mu$.
Hence,
by~\eqref{E: zeta q^n},
$\zeta_{\Curve/\F_q}(s)$ has period $2\pi i\,/\,\mu\log q$.
By Lemma~\ref{L: zeta q^n},
we find that $\zeta_{\Curve/\F_{q^\mu}}(s)$ has a pole of order~$\mu$ at $s=1$.
Since this pole is simple,
we find that~\mbox{$\mu=1$}.
We have proven the following theorem.

\begin{theorem}\label{T: degree one}
There exists a divisor on\/ $\Curve$ of degree $1$.
\end{theorem}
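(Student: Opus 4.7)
The plan is to combine the periodicity/pole-order analysis of $\zeta_{\Curve/\F_q}(s)$ that precedes the theorem with Lemma~\ref{L: zeta q^n} applied to $n=\mu$, and derive a contradiction unless $\mu=1$.

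First I would record the pole structure already obtained from~\eqref{E:zetaC Cl}. Because every valuation has degree divisible by $\mu$, the Euler product in~\eqref{E: Euler zetaC} (and equivalently the sum in~\eqref{E:zetaC Cl}) is a power series in $q^{-\mu s}$ only. Hence $\zeta_{\Curve/\F_q}(s)$ is periodic with period $2\pi i/(\mu\log q)$, and within a single period it has a simple pole at $s=1$ (coming from $q^{\mu s}=q^\mu$) and a simple pole at $s=0$ (coming from $q^{\mu s}=1$). In particular all poles of $\zeta_{\Curve/\F_q}(s)$ on the line $\Re s=1$ are simple and occur exactly at $s=1+2\pi i j/(\mu\log q)$, $j\in\Z$.

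Next I would apply Lemma~\ref{L: zeta q^n} with $n=\mu$, giving
\begin{gather*}
\zeta_{\Curve/\F_{q^\mu}}(s)=\prod_{k=1}^{\mu}\zeta_{\Curve/\F_q}\!\left(s+k\,\frac{2\pi i}{\mu\log q}\right).
\end{gather*}
For each $k\in\{1,\dots,\mu\}$, the $k$-th factor has a pole at $s=1$, because $s+k\cdot 2\pi i/(\mu\log q)$ lands on the pole $1+k\cdot 2\pi i/(\mu\log q)$ of $\zeta_{\Curve/\F_q}$. Each of these $\mu$ contributions is simple, and they occur at $\mu$ distinct factors, so the product has a pole of order exactly $\mu$ at $s=1$.

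Finally I would apply the same periodicity argument, but now to the curve viewed over $\F_{q^\mu}$: the Euler product $\zeta_{\Curve/\F_{q^\mu}}(s)=\prod_w1/(1-q_w^{-s})$ must, by the same reasoning as in~\eqref{E:zetaC Cl}, have only a simple pole at $s=1$ (the pole coming from the geometric-tail term). Comparing this to the order-$\mu$ pole derived from the lemma forces $\mu=1$, which is the content of Theorem~\ref{T: degree one}. The only delicate step is making sure that the two arguments are consistent about what is being called the zeta function—the factor $q^{(g-1)s}$ in front of the Euler product is entire and nowhere zero, so it plays no role in the order of poles, and both computations can be performed on the Euler product without any ambiguity.
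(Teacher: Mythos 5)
Your argument is correct and is essentially the paper's own proof: the paper likewise notes that $\zeta_{\Curve/\F_q}$ has period $2\pi i/(\mu\log q)$ with simple poles on $\Re s=1$, applies Lemma~\ref{L: zeta q^n} with $n=\mu$ to conclude that $\zeta_{\Curve/\F_{q^\mu}}$ has a pole of order $\mu$ at $s=1$, and contrasts this with the simplicity of that pole to force $\mu=1$. Your added remark about the harmless factor $q^{(g-1)s}$ is consistent with the paper's convention, which defines $\zeta_{\Curve/\F_{q^n}}$ without it.
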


\begin{example}
The curve $X^2+Y^2+1=0$ has no point over the rational numbers,
and every divisor consists of at least two points.
Indeed,
every divisor defined over the rational numbers or the real numbers has an even degree.
By the last theorem,
we cannot have a similar situation over a finite field.
\end{example}

We finish the computation of $\zeta_\Curve$.
For $\Curve=\P^1$,
i.e.,
genus zero,
$l(\calD)=n+1$ for $n=\deg\calD\geq0$.
By \eqref{E:zetaC Cl},
we find
$$
\zeta_{\P^1}(s)=\frac1{(q^s-1)(1-q^{1-s})}.
$$

For higher genus,
we use that
$$
l(\calD)\geq\max\{0,\deg\calD+1-g\},
$$
with equality for $\deg\calD<0$ or $\deg\calD>2g-2$.
We then find
\begin{gather}\label{E: zeta_C}
\zeta_\Curve(s)
=q^{(g-1)s}\sum_{n=0}^{2g-2}q^{-ns}\sum_{\calD\in\Pic(n)}\frac{q^{l(\calD)}-q^{\max\{0,n+1-g\}}}{q-1}+h\zeta_{\P^1}(s),
\end{gather}
where $h$ is the class number of $\Curve$.
We thus obtain
\begin{gather}\label{E: zetaC L}
\zeta_\Curve(s)=q^{gs}L_\Curve(q^{-s})\zeta_{\P^1}(s),
\end{gather}
where $L_\Curve(X)$,
for $X=q^{-s}$,
is given by
$$
L_\Curve(X)=(1-X)(1-qX)\sum_{n=0}^{2g-2}X^{n}\sum_{\calD\in\Pic(n)}\frac{q^{l(\calD)}-q^{\max\{0,n+1-g\}}}{q-1}+hX^g.
$$
It follows that $L_\Curve$ is a polynomial of degree~$2g$.
Moreover,
$L_\Curve(0)=1$.
We write $\omega_\nu$ for the reciprocal zeros of $L_\Curve$,
so that
we have
\begin{gather}\label{E:L=omega}
L_\Curve(X)=\prod_{\nu=1}^{2g}(1-\omega_\nu X).
\end{gather}

\subsubsection{The Two-Variable Zeta Function}
\label{S:two-variable}

In~\cite{Pell} (see also \cite{Expositiones}),
Pellikaan defines the following two-variable zeta function,
\begin{gather*}
\zeta_\Curve(s,t)=\frac{q^{(g-1)s}}{q^t-1}\sum_{n=-\infty}^\infty q^{-ns}\sum_{\calD\in\Pic(n)}q^{tl(\calD)}.
\end{gather*}
This series is convergent for~\mbox{$\Re t<\Re s<0$}.
Using
$$
n+1-g=l(\calD)-l(\calK-\calD)
$$
 for $\deg\calD=n$,
we obtain
$$
\zeta_\Curve(s,t)=\frac{1}{q^t-1}\sum_{n=-\infty}^\infty\sum_{\calD\in\Pic(n)}q^{(t-s)l(\calD)+sl(\calK-\calD)}.
$$
Replacing $s$ by $t-s$ and summing over $\calK-\calD$ instead of $\calD$,
we see that the two-variable zeta function satisfies the functional equation
$$
\zeta_\Curve(s,t)=\zeta_\Curve(t-s,t).
$$

For $\Curve=\P^1$,
we compute this function by using that~\mbox{$l(\calD)=0$} for $\deg\calD<0$ and $l(\calD)=\deg\calD+1$ for~\mbox{$\deg\calD\geq0$}.
Also,
 $h=1$ by Example \ref{E:P1}.
We find
$$
\zeta_{\P^1}(s,t)=\frac1{(q^s-1)(1-q^{t-s})}.
$$

In general,
we have
\begin{gather*}
\zeta_\Curve(s,t)=q^{(g-1)s}\sum_{n=0}^{2g-2}q^{-ns}
\sum_{\calD\in\Pic(n)}\frac{q^{tl(\calD)}-q^{t\max\{0,n+1-g\}}}{q^t-1}+h\zeta_{\P^1}(s,t),
\end{gather*}
where $h$ is the class number of $\Curve$.
By~\eqref{E: zeta_C},
it follows that $\zeta_\Curve(s,1)=\zeta_\Curve(s)$.
This proves the functional equation for the zeta function of $\Curve$,
$$
\zeta_\Curve(1-s)=\zeta_\Curve(s).
$$

\begin{remark}\label{R:symmetry}
Clearly,
by the functional equation,
if $\zeta_\Curve$ does not satisfy the Riemann hypothesis,
it will have zeros on both sides of the line $\Re s=1/2$.
Equivalently,
if not all the numbers $\omega_\nu$ have absolute value $\sqrt{q}$,
then some of them will be larger in absolute value,
and some will be smaller.
\end{remark}

\begin{remark}
See \cite{Expositiones} for more information about the two-variable zeta function.
In particular,
this function satisfies an expression analogous to \eqref{E: zetaC L}.
Remarkably,
by \cite[Example 4.4]{Pell},
it is not true that $\zeta_\Curve(s)$ determines $\zeta_\Curve(s,t)$.
\end{remark}

\section{A Proof of the Riemann Hypothesis for $\Curve$}
\label{S:RH}

\subsection{Points on $\Curve$}
\label{S:points}

Every valuation of $K$ corresponds to $\deg v$ points on $\Curve$ defined over the finite field~$K(v)$.
Let $N_\Curve(n)$ be the number of points on $\Curve$ with values in~$\F_{q^n}$.
Since $K(v)$ is a subfield of $\F_{q^n}$ if and only if $\deg v\mid n$,
we find that
\begin{gather}\label{E:N_C}
N_\Curve(n)=\sum_{\deg v|n}\deg v.
\end{gather}
By~\eqref{E: Euler zetaC},
we have
$$
-\frac1{\log q}\frac{\zeta_\Curve'}{\zeta_\Curve}(s)
=1-g+\sum_{n=1}^\infty q^{-ns}N_\Curve(n).
$$
On the other hand,
taking the logarithmic derivative of the expression~\eqref{E: zetaC L} of~$\zeta_\Curve$ as a rational function of $q^{-s}$,
we find by \eqref{E:L=omega}
$$
-\frac1{\log q}\frac{\zeta_\Curve'}{\zeta_\Curve}(s)
=1-g+\sum_{n=1}^\infty q^{-ns}\left(q^n-\omega_1^n-\dots-\omega_{2g}^n+1\right).
$$
Comparing coefficients,
we obtain
\begin{gather}\label{E:NCn}
N_\Curve(n)=q^n-\omega_1^n-\dots-\omega_{2g}^n+1.
\end{gather}

Recall that the Riemann hypothesis for $\Curve$ can be formulated as $|\omega_\nu|\leq\sqrt q$ for $\nu=1,\dots,2g$.
It follows from the Riemann hypothesis that
$$
\left|N_\Curve(n)-q^n-1\right|\leq2gq^{n/2}.
$$
Conversely,
we have the following lemma,
which states in particular that it suffices to prove~\eqref{E: RH} for all even $n$.

\begin{lemma}\label{L: RH}
If for every $\varepsilon>0$ there exists a natural number\/ $m$ such that the inequality
\begin{gather}\label{E: RH}
\left|N_\Curve(nm)-q^{nm}-1\right|\leq Bq^{nm(1/2+\varepsilon)}
\end{gather}
is satisfied for every\/ $n,$
then the Riemann hypothesis holds for\/~$\zeta_\Curve$.
\end{lemma}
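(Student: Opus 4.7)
The plan is to convert the hypothesis on $N_\Curve(nm)$ into an analytic statement about a generating function whose poles are precisely the $\omega_\nu^{-m}$. By formula~\eqref{E:NCn},
$$
q^{nm}+1-N_\Curve(nm)=\sum_{\nu=1}^{2g}\omega_\nu^{nm},
$$
so the hypothesis reads $\bigl|\sum_\nu\omega_\nu^{nm}\bigr|\leq Bq^{nm(1/2+\varepsilon)}$ for all $n\geq1$. Summing against $z^n$ gives the rational function
$$
g(z)=\sum_{n=1}^\infty\Bigl(\sum_{\nu=1}^{2g}\omega_\nu^{nm}\Bigr)z^n=\sum_{\nu=1}^{2g}\frac{\omega_\nu^m z}{1-\omega_\nu^m z},
$$
which has simple poles precisely at the points $z=\omega_\nu^{-m}$.

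Next I would read off the location of these poles from the hypothesis. The bound on the coefficients of $g$ forces the defining power series to converge absolutely on the open disk $|z|<q^{-m(1/2+\varepsilon)}$, so $g$ is holomorphic there. Since the only possible singularities of $g$ are at $z=\omega_\nu^{-m}$, we must have $|\omega_\nu^{-m}|\geq q^{-m(1/2+\varepsilon)}$, i.e.\
$$
|\omega_\nu|\leq q^{1/2+\varepsilon}\qquad(\nu=1,\dots,2g).
$$
Because $\varepsilon>0$ is arbitrary, letting $\varepsilon\to0$ yields $|\omega_\nu|\leq\sqrt q$ for every $\nu$.

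Finally I would invoke the functional equation $\zeta_\Curve(1-s)=\zeta_\Curve(s)$ proved in Section~\ref{S:two-variable}: it forces the multiset $\{\omega_\nu\}$ to be stable under $\omega\mapsto q/\omega$ (as recorded in Remark~\ref{R:symmetry}). Therefore $q/\omega_\nu$ is again one of the reciprocal zeros, and the bound from the previous step applied to it gives $|q/\omega_\nu|\leq\sqrt q$, i.e.\ $|\omega_\nu|\geq\sqrt q$. Combined with the upper bound we conclude $|\omega_\nu|=\sqrt q$ for every $\nu$, which is the Riemann hypothesis for $\zeta_\Curve$.

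There is no real obstacle: the argument is a clean application of the fact that a rational function is determined by its poles, together with the trivial observation that absolute convergence of a power series on a disk precludes poles inside that disk. The only point where one must be careful is the very last step, where without the functional equation one only obtains the one-sided inequality $|\omega_\nu|\leq\sqrt q$; the symmetry $\omega_\nu\leftrightarrow q/\omega_\nu$ is essential for upgrading this to equality.
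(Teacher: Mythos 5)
Your argument is correct, but it reaches the key inequality $|\omega_\nu|\leq q^{1/2+\varepsilon}$ by a genuinely different route than the paper. The paper's proof stays on the level of the power sums $\sum_\nu\omega_\nu^{nm}$: by simultaneous Diophantine approximation it selects infinitely many $n$ for which $\Re\omega_\nu^{nm}\geq\frac12|\omega_\nu|^{nm}$ for all $\nu$ at once, so that no cancellation can occur and $\left|N_\Curve(nm)-q^{nm}-1\right|\geq\frac12\max_\nu|\omega_\nu|^{nm}$; letting $n\to\infty$ along this sequence and then $\varepsilon\to0$ gives the one-sided bound. You instead package the power sums into the rational generating function $g(z)=\sum_\nu\omega_\nu^mz/(1-\omega_\nu^mz)$ and argue via the radius of convergence: the coefficient bound from \eqref{E: RH} and \eqref{E:NCn} forces holomorphy of $g$ on $|z|<q^{-m(1/2+\varepsilon)}$, so no pole $\omega_\nu^{-m}$ can lie strictly inside that disk. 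The two devices play the same role (ruling out cancellation among the $\omega_\nu^{nm}$), and your version is arguably cleaner, replacing the Diophantine-approximation step by the standard fact that a power series whose sum is a rational function converges exactly up to the nearest pole. The one point you should make explicit is that each $\omega_\nu^{-m}$ really is a pole of $g$: terms with distinct $\omega_\nu^m$ have poles at distinct points, while terms with equal $\omega_\nu^m$ contribute equal residues $-\omega_\nu^{-m}$, which add rather than cancel; without this observation, holomorphy on the disk would not by itself exclude the points $\omega_\nu^{-m}$ from it. Your final step---upgrading $|\omega_\nu|\leq\sqrt q$ to equality via the functional equation, i.e.\ the stability of the multiset $\{\omega_\nu\}$ under $\omega\mapsto q/\omega$---is exactly the paper's appeal to Remark~\ref{R:symmetry}.
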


\begin{proof}
Let $\varepsilon>0$.
By Diophantine approximation,
we can find infinitely many $n$ such that $\Re\omega_\nu^{nm}\geq\frac12|\omega_\nu|^{nm}$ for $\nu=1,\dots,2g$.
Then
$$
\left|N_\Curve(nm)-q^{nm}-1\right|\geq\frac12\max_\nu|\omega_\nu|^{nm}.
$$
Letting $n\to\infty$,
we find that $|\omega_\nu|\leq q^{1/2+\varepsilon}$ for every $\nu$.
Since this holds for every $\varepsilon>0$,
we obtain $|\omega_\nu|\leq q^{1/2}$.
By Remark \ref{R:symmetry},
we conclude that $|\omega_\nu|=q^{1/2}$ for every $\nu$.
\end{proof}

\subsection{The Graph of the Frobenius Flow}
\label{S:graph of F}

Figure~\ref{F: Frobenius} depicts the graph
$$
Y=\phi_q(X)
$$
of one step of the Frobenius flow in $\Curve\times\Curve$.
The intersection with the diagonal
$$
\Delta\colon Y=X
$$
gives the points $(x,x)$ with $\phi_q(x)=x$.
These are the points on $\Curve$ defined over~$\F_q$,
and their number is $N_\Curve(1)$.
We assume that there is at least one point on the intersection,
which we denote by $(\infty,\infty)$.
We write $v_\infty$ for the corresponding valuation of degree $1$ of $K$.

\begin{figure}[tb]
\begin{picture}(324,240)(-30,-8)
\put(0,0){\epsfxsize 8cm\epsfbox{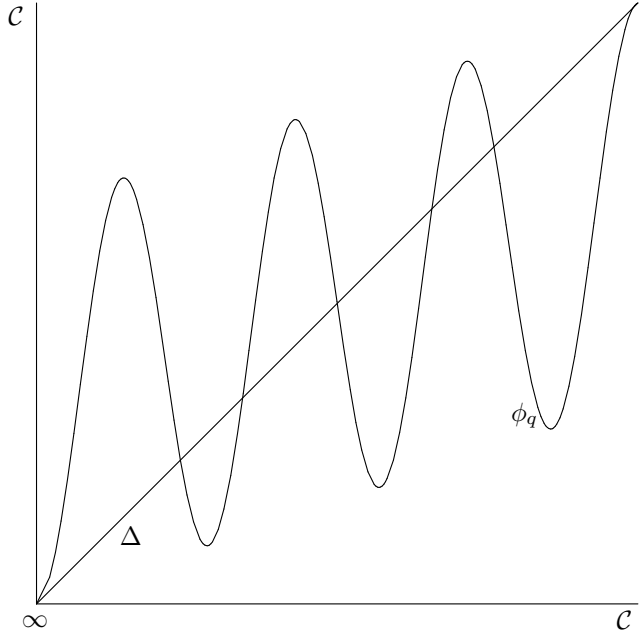}}
\put(0,-6){\makebox(0,0){$\infty$}}
\put(220,-9){$\Curve$}\put(-10,220){$\Curve$}
\put(32,23){$\Delta$}\put(180,70){$\phi_q$}
\end{picture}
\caption[The graph of Frobenius intersected with the diagonal]{The graph of Frobenius intersected with the diagonal.}
\label{F: Frobenius}
\end{figure}

\begin{remark}
The Frobenius automorphism is smooth,
since it is a polynomial map.
Also,
its derivative vanishes,
so our intuition says that this map should be constant,
or at least locally constant.
Being a polynomial of degree $q$,
it seems to be a $q$-to-one map,
but in fact,
it is one-to-one.
Figure~\ref{F: Frobenius} emphasizes the smoothness and ignores the injectivity of the Frobenius flow.
\end{remark}

The functions defined over $\F_q$ with a pole of order at most $m$ at $\infty$ and no other poles form an $\F_q$-vector space $L_m=L(m(\infty))$.
By Theorem~\ref{T: RR},
the dimension of $L_m$ is
$$
l_m=l(m(\infty))=m+1-g+l(\calK-m(\infty)).
$$
We find
\begin{gather}\label{E:lm<}
m+1-g\leq l_m\leq m+1,
\end{gather}
and
\begin{gather}\label{E:lm=}
l_m=m+1-g\text{ for }m>2g-2.
\end{gather}

Clearly,
$L_{m+1}$ contains $L_m$ as a subspace.
Also,
$l_{m+1}\leq l_m+1$,
since for two functions $f$ and $g$ in $L_{m+1}$ for which $f\not\in L_m$,
we can find a constant $\lambda\in\F_q$ such that $g-\lambda f\in L_m$.
Let
$$
s_1,\dots,s_{l_m}
$$
be a basis for $L_m$ such that $v_\infty(s_{i+1})<v_\infty(s_i)$,
i.e.,
the order of the pole of~$s_i$ at~$\infty$ increases with $i$.
Given $k\geq0$,
we choose coefficients $a_i\in L_k$ to form
\begin{gather*}
f(X,Y)=\sum_{i=1}^{l_m}a_i(X)s_i(Y).
\end{gather*}

\begin{remark}\label{R:arith geo}
For $\Curve=\P^1$,
$s_1(Y)=1$,
$s_2(Y)=Y$,
$s_3(Y)=Y^2,\dots$,
and $f(X,Y)$ is a polynomial in $Y$ with coefficients in $X$,
which we will assume to be of low degree in $X$.
This is analogous to a polynomial in $Y$ with integer coefficients.
Therefore,
 we call $Y$ the `geometric' coordinate,
and $X$ the `arithmetic' coordinate.
\end{remark}

We thus obtain a space of functions $f(X,Y)$ on $\Curve\times\Curve$ defined over~$\F_q$.
The restriction of $f(X,Y)$ to the graph of Frobenius is
\begin{gather*}
f_{|\phi}(X)=f(X,\phi_q(X)).
\end{gather*}
The map $f\mapsto f_{|\phi}$ is $\F_q$-linear.
Note that $s_i(\phi_q(X))=s_i^q(X)$,
since $s_i$ is defined over $\F_q$.
Hence $f_{|\phi}\in L_{k+qm}$.

\begin{lemma}\label{L: f|phi}
For\/ $k<q,$
the map\/ $f\mapsto f_{|\phi}$ is injective,
and hence an isomorphism onto its image.
\end{lemma}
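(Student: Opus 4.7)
The plan is to argue by contradiction, looking only at the valuation at the point $\infty$. Suppose some nonzero $f=\sum_{i=1}^{l_m} a_i(X)\,s_i(Y)$ with $a_i\in L_k$ satisfies $f_{|\phi}=\sum_i a_i(X)\,s_i(X)^q=0$, and take $i_0$ to be the largest index with $a_{i_0}\neq 0$. Writing $m_i=-v_\infty(s_i)$, the basis is arranged so that $0\leq m_1<m_2<\cdots<m_{l_m}\leq m$, and each $a_i\in L_k$ satisfies $v_\infty(a_i)\geq -k$. I would isolate the top term,
\[
a_{i_0}\,s_{i_0}^{\,q}\;=\;-\sum_{i<i_0} a_i\,s_i^{\,q},
\]
and take $v_\infty$ of both sides. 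Since $m_i\leq m_{i_0}-1$ for every $i<i_0$, the ultrametric inequality bounds the right-hand side below by
\[
\min_{i<i_0}\bigl(v_\infty(a_i)-qm_i\bigr)\;\geq\;-k-q(m_{i_0}-1)\;=\;-qm_{i_0}+(q-k),
\]
so matching with $v_\infty(a_{i_0}s_{i_0}^{\,q})=v_\infty(a_{i_0})-qm_{i_0}$ on the left forces $v_\infty(a_{i_0})\geq q-k$.

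This is the decisive place where the hypothesis $k<q$ enters: it makes $q-k\geq 1$, so $a_{i_0}$ has a genuine zero at $\infty$. But $a_{i_0}\in L_k$ is already regular off $\infty$, so now $a_{i_0}$ has no poles anywhere on $\Curve$, and a regular function on the complete curve $\Curve$ is a constant, i.e.\ an element of $\F_q$. A nonzero constant has $v_\infty=0$, contradicting $v_\infty(a_{i_0})\geq q-k\geq 1$; hence $a_{i_0}=0$, contradicting the choice of $i_0$, and I conclude $f=0$. Since a linear map is injective exactly when it is an isomorphism onto its image, this finishes the lemma. I do not foresee any real obstacle — the entire argument is pole-order bookkeeping at the single point $\infty$, and the role of the hypothesis $k<q$ is precisely to make the Frobenius-induced gap of $q$ between the pole orders of consecutive $s_i^{\,q}$ strictly larger than the $k$-wide window in which the $a_i$ can shift the pole, so that leading-term cancellation is compatible with the relation only if the top coefficient $a_{i_0}$ is forced to be a constant, at which point completeness of $\Curve$ immediately finishes off the contradiction.
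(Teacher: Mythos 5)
Your proof is correct and follows essentially the same route as the paper: a pole-order comparison at $\infty$ using the ultrametric inequality, with the hypothesis $k<q$ making the Frobenius gap of $q$ between consecutive pole orders $qm_i$ exceed the at-most-$k$ shift from the coefficients. The only cosmetic difference is that you invoke the fact that a pole-free function on the complete curve is a constant at the end (to kill $a_{i_0}$ once $v_\infty(a_{i_0})\geq q-k\geq 1$), whereas the paper uses the equivalent observation $v_\infty(a_i)\leq 0$ for nonzero $a_i\in L_k$ at the outset.
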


\begin{proof}
Assume $f\mapsto f_{|\phi}=0$.
That is,
$\sum_{i=1}^{l_m}a_is_i^q=0$.
Consider the order of the pole at $\infty$.
If $a_i\neq0$,
then $v_\infty(a_is_i^q)\leq qv_\infty(s_i)\leq -q+qv_\infty(s_j)$ for every~$j<i$.
Further,
$v_\infty(a_js_j^q)\geq -k+qv_\infty(s_j)>-q+qv_\infty(s_j)$.
Hence the pole of the nonzero term of highest order in~$\smash{f_{|\phi}}$ is not cancelled by the pole of any of the other terms.
It follows that the highest nonzero term vanishes.
We conclude that there is no highest nonzero term,
and hence~$f=0$.
\end{proof}

We take the coefficients~$a_i$ to be $p^\mu$-th powers,
for $p^\mu<q$,
so that $f_{|\phi}$ is a $p^\mu$-th power.
Hence the coefficients are of the form
$$
a_i=b_i^{p^\mu}.
$$
We choose $b_i\in L_n$,
so that $a_i\in L_{p^\mu n}$.
To be able to apply Lemma~\ref{L: f|phi},
we assume that
\begin{gather}\label{pl}
p^\mu n<q.
\end{gather}\medskip

We can also restrict $f$ to the diagonal:
\begin{gather*}
f_{|\Delta}(X)=f(X,X).
\end{gather*}
We obtain the two restriction maps,
for $k<q$,
\begin{gather*}\begin{CD}
f	@>>>	f_{|\Delta}\\
@|\\
f_{|\phi}
\end{CD}\end{gather*}
where the vertical equality is the isomorphism of Lemma~\ref{L: f|phi}.

Since $f_{|\phi}$ only has a pole at $\infty$,
 of order at most $p^\mu n+qm$,
it also has at most this many zeros,
counted with multiplicity.
If therefore $f_{|\Delta}=0$ and $f_{|\phi}\neq0$,
then we have a function $f_{|\phi}$ on $\Curve$,
obtained by restricting a function on~\mbox{$\Curve\times\Curve$} that vanishes on the diagonal,
except at $(\infty,\infty)$.
Apart from~$(\infty,\infty)$,
the diagonal intersects the graph of Frobenius at~$N_\Curve(1)-1$ points.
Since~$f_{|\phi}$ is a $p^\mu$-th power,
this function has at least~\mbox{$p^\mu(N_\Curve(1)-1)$} zeros,
counted with multiplicity.
Comparing with the pole of $f_{|\phi}$,
 we find the inequality~\mbox{$N_\Curve(1)\leq1+n+\frac q{p^\mu}m$}.
By~\eqref{pl},
$1+n\leq qp^{-\mu}$,
hence we obtain
\begin{gather}\label{E: N1m}
N_\Curve(1)\leq\frac q{p^\mu}(m+1).
\end{gather}
In particular,
 for given $\mu$,
the best bound for $N_\Curve(1)$ is obtained when $m$ is as small as possible.

\begin{example}
For genus zero,
we take
$$
f(X,Y)=X^{p^\mu}-Y^{p^\mu}.
$$
Then $f_{|\phi}(X)=X^{p^\mu}-X^{qp^\mu}$ and $f_{|\Delta}(X)=0$.
The function $f_{|\phi}$ has a pole at infinity of order $qp^\mu$,
and at least $p^\mu(N_\Curve(1)-1)$ zeros,
counted with multiplicity.
Hence the number of points on the projective line over $\F_q$ satisfies $N_{\P^1}(1)\leq q+1$.
In fact,
equality holds.
\end{example}

For higher genus,
we will not explicitly construct a function $f$ such that \mbox{$f_{|\Delta}=0$} and $f_{|\phi}\ne0$,
but we prove that such a function exists.
The space of functions~$f(X,Y)$ that we have constructed has dimension $l_nl_m$.
Assume that
$$
n,m\geq g.
$$
Then $l_nl_m\geq(n+1-g)(m+1-g)$ by~\eqref{E:lm<}.
The functions~$f_{|\Delta}$ lie in $L_{p^\mu n+m}$.
To assure the existence of a nontrivial function such that $f_{|\Delta}=0$,
we choose~$n$ and $m$ so that~\mbox{$(n+1-g)(m+1-g)>l_{p^\mu n+m}$},
since then the kernel of the map $f\mapsto f_{|\Delta}$ is nontrivial.
Since~\mbox{$p^\mu n+m>2g-2$} and by~\eqref{E:lm=},
this means that we want
$$
(n+1-g)(m+1-g)>p^\mu n+m+1-g,
$$
or equivalently,
\begin{gather}\label{ker}
(m+1-g-p^\mu)(n-g)>p^\mu g.
\end{gather}
Since we want to choose $m$ as small as possible,
we choose $n$ as large as possible.
The largest value for $n$ so that~\eqref{pl} is satisfied is
$$
n= q{p^{-\mu}}-1.
$$
We thus obtain from \eqref{ker} a lower bound for $m$,
which we can write as
\begin{gather*}
\frac q{p^\mu}(m+1)>q+g\biggl(\frac q{p^\mu}+\frac{p^\mu}{1-(g+1)p^\mu/q}\biggr)>q+g\biggl(\frac q{p^\mu}+p^\mu\biggr).
\end{gather*}
Thus the upper bound for $N_\Curve(1)$ that we can derive from~\eqref{E: N1m} is best possible if~\mbox{$q/p^\mu=p^{\mu}$}.
Therefore,
we assume that $q$ is an even power of~$p$,
as we may by Lemma~\ref{L: RH},
and we choose $\mu$ such that $p^\mu=\sqrt q$.
Then \eqref{ker} is equivalent to
$$
m+1>\sqrt q+2g+\frac{g(g+1)}{\sqrt q-(g+1)}.
$$
For $q>(g+1)^4$,
 this inequality is satisfied for
$$
m+1=\sqrt q+2g+1.
$$

With these choices,
there exists a nontrivial function $f$ on $\Curve\times\Curve$ such that $f_{|\Delta}=0$.
By Lemma~\ref{L: f|phi},
 also $f_{|\phi}$ is nontrivial,
hence we obtain the following theorem.

\begin{theorem}\label{T: upper bound}
For\/ $q>(g+1)^4,$
a square,
we have
$$
N_\Curve(1)=|\Curve(\F_q)|\leq q+(2g+1)\sqrt{q},
$$
where\/ $g$ is the genus of\/ $\Curve$.
\end{theorem}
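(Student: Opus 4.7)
The plan is to execute in full the construction sketched in the preceding paragraphs: choose concrete values of the parameters $n$, $m$, and $\mu$ that satisfy both the injectivity hypothesis~\eqref{pl} and the kernel-producing inequality~\eqref{ker}, exhibit a nontrivial $f(X,Y)=\sum_{i=1}^{l_m}b_i(X)^{p^\mu}s_i(Y)$ with $f_{|\Delta}=0$, transport it via Lemma~\ref{L: f|phi} to a nontrivial $f_{|\phi}$, and read off the bound from~\eqref{E: N1m}.

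Since $q$ is assumed to be a square and $q=p^{2\mu}$, I would set
\begin{gather*}
p^\mu=\sqrt q,\qquad n=\sqrt q-1,\qquad m=\sqrt q+2g.
\end{gather*}
With this choice, $p^\mu n=q-\sqrt q<q$, so the hypothesis~\eqref{pl} holds and Lemma~\ref{L: f|phi} applies. The key inequality~\eqref{ker} becomes
$$
(m+1-g-p^\mu)(n-g)=(g+1)(\sqrt q-1-g)>\sqrt q\,g=p^\mu g,
$$
which after rearrangement is equivalent to $\sqrt q>(g+1)^2$, i.e.\ precisely the hypothesis $q>(g+1)^4$.

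With~\eqref{ker} verified, the $\F_q$-linear space of functions $f(X,Y)$ as above has dimension $l_n\,l_m\ge(n+1-g)(m+1-g)$, strictly exceeding $l_{p^\mu n+m}=p^\mu n+m+1-g$ (using~\eqref{E:lm=}, since $p^\mu n+m>2g-2$). Hence the restriction map $f\mapsto f_{|\Delta}$ has a nontrivial kernel, producing a nonzero $f$ with $f_{|\Delta}=0$; by Lemma~\ref{L: f|phi}, $f_{|\phi}$ is also nonzero. Because $f_{|\phi}$ is a $p^\mu$-th power and its only pole is at $\infty$ of order at most $p^\mu n+qm$, while it must vanish with multiplicity at least $p^\mu$ at each of the $N_\Curve(1)-1$ points of $\Curve(\F_q)\setminus\{\infty\}$ on $\Delta\cap\{Y=\phi_q(X)\}$, comparing zeros and poles gives~\eqref{E: N1m}, and substituting $p^\mu=\sqrt q$ and $m+1=\sqrt q+2g+1$ yields
$$
N_\Curve(1)\le\frac{q}{p^\mu}(m+1)=\sqrt q\,(\sqrt q+2g+1)=q+(2g+1)\sqrt q.
$$

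The only substantive point to keep an eye on is the bookkeeping in the dimension count: one must be certain that the map $(b_1,\dots,b_{l_m})\mapsto f$ is injective (which follows from the linear independence of $s_1,\dots,s_{l_m}$ and the injectivity of Frobenius on $L_n$) so that the source dimension really is $l_n\,l_m$, and that the lower bounds for $l_n$ and $l_m$ from~\eqref{E:lm<} suffice; the hypothesis $n,m\ge g$ holds automatically from $\sqrt q>(g+1)^2\ge g+1$. Everything else is an algebraic manipulation already recorded in the discussion leading up to the theorem.
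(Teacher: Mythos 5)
Your proposal is correct and follows the paper's own argument essentially verbatim: the same choices $p^\mu=\sqrt q$, $n=\sqrt q-1$, $m=\sqrt q+2g$, the same dimension count against $l_{p^\mu n+m}$ to produce a nonzero $f$ with $f_{|\Delta}=0$, and the same zero--pole comparison via Lemma~\ref{L: f|phi} leading to~\eqref{E: N1m}; your direct check that~\eqref{ker} reduces to $\sqrt q>(g+1)^2$ is if anything slightly cleaner bookkeeping than the paper's general optimization over $m$. The only point the paper adds is the trivial remark that if no rational point $\infty\in\Curve(\F_q)$ exists, then $N_\Curve(1)=0$ and the stated inequality holds vacuously.
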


Note that the above argument depends on the existence of a point $\infty$ on~$\Curve(\F_q)$.
If such a point does not exist,
then $N_\Curve(1)=0$ and the inequality for~$N_\Curve(1)$ is trivially satisfied.

\subsection{Galois Covers of $\Curve$}
\label{S:Galois}

\begin{figure}[tb]
\begin{picture}(324,240)(-30,-8)
\put(0,0){\epsfxsize 8cm\epsfxsize 8cm\epsfbox{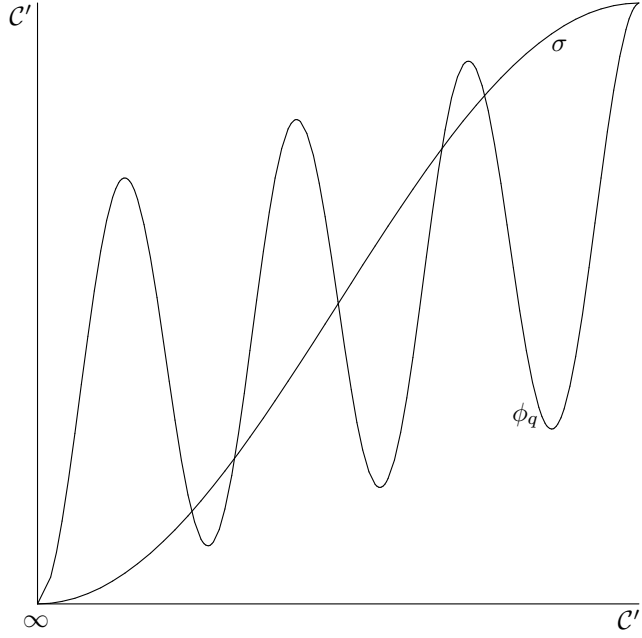}}
\put(0,-6){\makebox(0,0){$\infty$}}
\put(220,-9){$\Curve'$}\put(-10,220){$\Curve'$}
\put(195,210){$\sigma$}\put(180,70){$\phi_q$}
\end{picture}
\caption[The graph of Frobenius intersected with the graph of $\sigma$]
{The graph of Frobenius intersected with the graph of $\sigma$.}
\label{F: sigma}
\end{figure}

Let $\Curve'\longrightarrow\Curve$ be a Galois cover of curves,
i.e.,
the function field $L$ of $\Curve'$ is a Galois extension of $K$.
The field $L$ can be written as
$$
K[X]/(m(X)),
$$
for some irreducible polynomial $m$.
For every automorphism $\sigma$ of $L$ over $K$,
the element $\sigma(X)$ lies in $L$,
hence we can find a polynomial $f$ with coefficients in $K$ such that $\sigma (X)=f(X)+(m)$.
It follows that the action of $\sigma$ on $\Curve'$ is algebraic,
induced by $X\mapsto f(X)$.
(See Figure~\ref{F: sigma}.)\medskip

Let $w$ be a valuation of $L$,
and $v$ its restriction to $K$.
The {\em decomposition group\/} of $w$ over $v$,
$$
Z(w/v)=\{\sigma\in\Gal(L/K)\colon w(\sigma x)>0\text{ if }w(x)>0\}
$$
is the group of continuous automorphisms of $L$ over $K$,
and the {\em ramification group\/} of $w$ over $v$ is its subgroup
$$
T(w/v)=\{\sigma\in Z(w/v)\colon w(x-\sigma x)>0\text{ if }w(x)\geq0\},
$$
the group of automorphisms that act trivially modulo the maximal ideal $\pi_w\oo_w$.
We denote the order of $T(w/v)$ by $e(w/v)$,
\begin{gather}\label{E:e}
e(w/v)=|T(w/v)|,
\end{gather}
the {\em order of ramification\/} of $w$ over $v$.
The group $Z(w/v)/T(w/v)$ is isomorphic to the Galois group of $L(w)$ over $K(v)$.
We denote its order by $f(w/v)$,
the {\em degree of inertia\/} of $w$ over $v$,
so that $\deg w=f(w/v)\deg v$.
It is generated by the Frobenius automorphism of $L(w)$ over $K(v)$,
the automorphism that raises an element to the power $q^{\deg v}$.
There are $e(w/v)$ different automorphisms in $\Gal(L/K)$ that induce this automorphism modulo $\pi_w\oo_w$.

\subsection{Frobenius as Symmetries of a Cover}
\label{S: point frobenius}

We also need a generalization of Theorem~\ref{T: upper bound} to Galois covers.
Let
\begin{gather}\label{E:cover}
\Curve'\longrightarrow\Curve\longrightarrow\P^1
\end{gather}
be the Galois cover corresponding to the Galois closure of $K$ over $\q$.
Let~$G$ be the Galois group of the cover~\mbox{$\Curve'\rightarrow\P^1$}.
For $\sigma\in G$,
we define
\begin{align*}
N_{\Curve'}(1,\sigma)
&=\bigl|\bigl\{x\in\Curve'(\Fpa)\colon x\text{ projects to $\P^1(\F_q)$ and }\phi_{q}(x)=\sigma(x)\bigr\}\bigr|.
\end{align*}

\begin{theorem}\label{T: upper bound, covers}
For\/ $q>(g+1)^4,$
a square,
we have
$$
N_{\Curve'}(1,\sigma)\leq q+(2g'+1)\sqrt{q},
$$
where\/ $g'$ is the genus of\/ $\Curve'$.
\end{theorem}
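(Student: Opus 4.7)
The plan is to mimic the argument of Section~\ref{S:graph of F} on the cover $\Curve'$, replacing the diagonal $\Delta$ by the graph of $\sigma$ in $\Curve'\times\Curve'$ (see Figure~\ref{F: sigma}). Under this replacement, the intersection of the graph of $\sigma$ with the graph of $\phi_q$ consists precisely of the $N_{\Curve'}(1,\sigma)$ points in question. If $N_{\Curve'}(1,\sigma)=0$ the theorem is trivial, so fix a point $\infty\in\Curve'(\Fpa)$ on this intersection and let $y_0=\phi_q(\infty)=\sigma(\infty)$.

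Take $p^\mu=\sqrt q$. Choose a basis $s_1,\dots,s_{l_m}$ of $L(m\cdot y_0)$ with strictly decreasing orders $v_{y_0}(s_{i+1})<v_{y_0}(s_i)$, and consider
$$
f(X,Y)=\sum_{i=1}^{l_m}b_i(X)^{p^\mu}\,s_i(Y),\qquad b_i\in L(n\cdot\infty).
$$
Restriction to the graph of Frobenius gives $f_{|\phi}(X)=\sum b_i(X)^{p^\mu}s_i(\phi_q(X))$, whose only pole is at $\infty$, of order at most $p^\mu n+qm$. Since $\sigma$ is an automorphism of $\Curve'$ sending $\infty$ to $y_0$, the restriction $f_{|\sigma}(X)=\sum b_i(X)^{p^\mu}s_i(\sigma(X))$ lies in $L((p^\mu n+m)\cdot\infty)$. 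Comparing orders of poles at $\infty$ of the terms with strictly decreasing $v_\infty$, the same argument as in Lemma~\ref{L: f|phi} shows that $f\mapsto f_{|\phi}$ is injective whenever $p^\mu n<q$.

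The source of the restriction maps has $\F_q$-dimension $l_nl_m\ge(n+1-g')(m+1-g')$, while the target of $f\mapsto f_{|\sigma}$ has dimension at most $p^\mu n+m+1-g'$ once $p^\mu n+m>2g'-2$. Setting $n=\sqrt q-1$ and $m=\sqrt q+2g'$, the hypothesis $q>(g'+1)^4$ forces a strict inequality, producing a nonzero $f$ with $f_{|\sigma}=0$ and hence, by injectivity, $f_{|\phi}\ne0$. Because $f_{|\phi}$ is a $p^\mu$-th power, it vanishes to order at least $p^\mu$ at each of the $N_{\Curve'}(1,\sigma)-1$ intersection points other than $\infty$, giving
$$
p^\mu\bigl(N_{\Curve'}(1,\sigma)-1\bigr)\le p^\mu n+qm,
$$
which rearranges to $N_{\Curve'}(1,\sigma)\le q+(2g'+1)\sqrt q$.

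The main technical obstacle is arranging that $s_i\circ\phi_q=s_i^q$ holds as a functional identity on $\Curve'$ and that $L(m\cdot y_0)$ has the expected $\F_q$-dimension. Both are automatic when $y_0\in\Curve'(\F_q)$, e.g.\ when $\sigma$ fixes an $\F_q$-rational point of $\Curve'$; in general, $y_0$ need only lie above an $\F_q$-point of $\P^1$, and one repairs this either by replacing $m\cdot y_0$ with the effective $\phi_q$-invariant divisor supported on the full Galois orbit of $y_0$ (the resulting $O(1)$ perturbation of dimensions is absorbed by the hypothesis $q>(g'+1)^4$), or by passing to and descending from a suitable constant-field extension. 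With this caveat, the counting is a routine transcription of the argument in Section~\ref{S:graph of F}.
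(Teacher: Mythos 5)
Your proposal is essentially the paper's own proof: the paper's argument for this theorem is exactly the transcription of Section~\ref{S:graph of F} with the graph of $\sigma$ in place of the diagonal, the homomorphism $f\mapsto f_{|\sigma}$ in place of $f\mapsto f_{|\Delta}$, and the same choices $p^\mu=\sqrt q$, $n=\sqrt q-1$, $m=\sqrt q+2g'$; your dimension count and the final rearrangement to $q+(2g'+1)\sqrt q$ match it, and your use of $q>(g'+1)^4$ is indeed what the transplanted argument needs (the paper's stated hypothesis $q>(g+1)^4$ is harmless only because of Lemma~\ref{L: RH}).

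The one place where you diverge is the rationality caveat, and there your two proposed repairs are the weak point, while the difficulty itself dissolves if you simply run the whole construction over $\Fpa$. Over $\Fpa$, Riemann--Roch gives $l(m\cdot y_0)=m+1-g'$ for $m>2g'-2$; since $\phi_q$ is everywhere ramified of degree $q$ and $\phi_q^{-1}(y_0)=\infty$, the pole order of $s_i\circ\phi_q$ at $\infty$ is $q$ times that of $s_i$ at $y_0$ with no rationality hypothesis, which is all that Lemma~\ref{L: f|phi} and the zero/pole count use; moreover $s_i\circ\phi_q$ equals the $q$-th power of the function obtained from $s_i$ by taking $q$-th roots of its coefficients, so each term $b_i^{p^\mu}\,(s_i\circ\phi_q)$, and hence $f_{|\phi}$, is still a $p^\mu$-th power; finally $(b_1,\dots,b_{l_m})\mapsto f_{|\sigma}$ is additive and $p^\mu$-semilinear over $\Fpa$, which still has a subspace kernel and image, so the rank--nullity count goes through unchanged. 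By contrast, your orbit-divisor repair forces $n$ and $m$ to be multiples of the orbit degree $d$, the single-point pole-order argument of Lemma~\ref{L: f|phi} does not transfer verbatim to a divisor with $d$ points in its support, and the resulting loss in the final bound is of order $d\sqrt q$ rather than $O(1)$ (enough for the Riemann hypothesis via Lemma~\ref{L: RH}, but not for the stated constant $2g'+1$); and a constant-field extension replaces $\phi_q$ by $\phi_{q^N}$, changing the set counted by $N_{\Curve'}(1,\sigma)$, with no descent mechanism given. So the proof stands, but the caveat should be discharged by the base-change-to-$\Fpa$ observation rather than by either of the patches you sketch.
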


\begin{proof}
Let $X$ and $Y$ denote the `arithmetic' and `geometric' coordinates on~$\Curve'\times\Curve'$;
see Figure~\ref{F: sigma}.
As in Section~\ref{S:graph of F},
we have the restrictions
\begin{gather*}\begin{CD}
f	@>>>	f_{|\sigma}\\
@|\\
f_{|\phi}
\end{CD}\end{gather*}
where $f_{|\sigma}(X)=f(X,\sigma (X))$ is the restriction of $f(X,Y)$ to the graph of~$\sigma$ and,
as before,
 $f_{|\phi}(X)$ is the restriction to the graph of the Frobenius flow.
Clearly,
if~$f_{|\sigma}$ vanishes,
then $f_{|\phi}$ vanishes at the points counted in $N_{\Curve'}(1,\sigma)$.
The rest of the argument is as before,
applied to~$\Curve'$ and the homomorphism~\mbox{$f_{|\phi}\mapsto f_{|\sigma}$}.
\end{proof}

\begin{theorem}
The curve\/ $\Curve$ satisfies the Riemann hypothesis.
That is,
if\/ $\zeta_\Curve(s)=0$ then\/ $\Re s=1/2$.
\end{theorem}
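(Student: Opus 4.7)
The plan is to apply Lemma~\ref{L: RH}, which reduces the Riemann hypothesis to a bound of the form $|N_\Curve(mn) - q^{mn} - 1| \le B\, q^{mn/2}$ for some fixed $m$, some $B$, and all $n \ge 1$. I take $m = 2$, so that $q^{mn}$ is automatically a square and exceeds both $(g+1)^4$ and $(g'+1)^4$ for all but finitely many $n$, the small exceptions being absorbed into $B$. Working with base field $\F_{q^{2n}}$, Theorem~\ref{T: upper bound} supplies the upper half of the sandwich immediately: $N_\Curve(2n) \le q^{2n} + (2g+1)q^n$.

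For the lower half I invert Theorem~\ref{T: upper bound, covers}, using the Galois cover $\Curve' \to \Curve \to \P^1$ of Section~\ref{S: point frobenius} with $G = \Gal(\Curve'/\P^1)$ and $H = \Gal(\Curve'/\Curve)$. A geometric point $x \in \Curve'(\Fpa)$ lying over $P \in \P^1(\F_{q^{2n}})$ satisfies $\phi_{q^{2n}}(x) = \sigma(x)$ for a unique $\sigma \in G$, and because $\phi_{q^{2n}}$ commutes with the $\F_q$-rational action of $G$, a routine count over the fiber of $P$ shows that the image $y$ of $x$ in $\Curve$ lies in $\Curve(\F_{q^{2n}})$ precisely when that unique $\sigma$ happens to lie in $H$. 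Summing over fibers and accounting for the finitely many branch points of $\Curve' \to \P^1$ gives
$$
\sum_{\sigma \in G} N_{\Curve'}(1,\sigma) = |G|(q^{2n}+1) + O(1)
\quad\text{and}\quad
\sum_{\tau \in H} N_{\Curve'}(1,\tau) = |H|\, N_\Curve(2n) + O(1),
$$
with all counts taken relative to $\F_{q^{2n}}$ and with error terms independent of $n$.

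Subtracting these two identities and bounding each term of the complementary sum by Theorem~\ref{T: upper bound, covers} yields
\begin{align*}
|H|\, N_\Curve(2n)
&\ge |G|(q^{2n}+1) - (|G|-|H|)\bigl(q^{2n} + (2g'+1)q^n\bigr) - O(1) \\
&= |H|\, q^{2n} + |G| - (|G|-|H|)(2g'+1)\, q^n - O(1),
\end{align*}
so $N_\Curve(2n) \ge q^{2n} + 1 - ([K:\q]-1)(2g'+1)\, q^n - O(1)$. Combined with the upper bound, this produces $|N_\Curve(2n) - q^{2n} - 1| \le C\, q^n$ with $C$ independent of $n$, and Lemma~\ref{L: RH} then completes the proof.

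The only place demanding genuine care is the uniform boundedness of the ramification corrections in the two displayed sums. Since the branch locus of $\Curve' \to \P^1$ is a fixed finite set of closed points, each with fixed ramification data, the discrepancy between the sums and their ``unramified'' values is a constant depending only on the cover, not on $n$. This finite bookkeeping is the main technical obstacle; the rest is a direct combination of the two upper bounds already established.
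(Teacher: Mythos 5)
Your proposal is correct and follows essentially the same route as the paper: the two counting identities over the Galois closure $\Curve'\to\Curve\to\P^1$ (which the paper even obtains exactly, with no $O(1)$ correction, since each ramified point still contributes $|G|$ via the $e$ Frobenius lifts in its decomposition group), the upper bounds of Theorems~\ref{T: upper bound} and~\ref{T: upper bound, covers}, and Lemma~\ref{L: RH}. Your only departures are cosmetic: you make the base change to $\F_{q^{2n}}$ explicit with $m=2$, and you subtract the whole complementary sum over $G\setminus H$ at once instead of bounding each $N_{\Curve'}(1,\tau)$ individually and then summing over $H$.
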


\begin{proof}
As in~\eqref{E:cover},
let $\Curve'$ be the Galois closure of the cover~\mbox{$\Curve\rightarrow\P^1$},
with Galois group $G$.
Consider the sum
$$
\sum_{\sigma\in G}N_{\Curve'}(1,\sigma).
$$
Above every point $t$ of~$\P^1(\F_q)$,
we have $|G|/e$ points of~\mbox{$\Curve'(\Fpa)$},
where~$e$ is the ramification index of any of the associated valuations in~$\Curve'$.
Further,
for a point~$t'$ of $\Curve'$ above~$t$,
we have~$e$ different automorphisms in $G$ that induce Frobenius on the residue class field.
Hence in the sum,
each point of~$\P^1(\F_q)$ is counted~$|G|$ times.
Since~\mbox{$\P^1(\F_q)$} has~\mbox{$q+1$} points,
we obtain
\begin{gather*}
\sum_{\sigma\in G}N_{\Curve'}(1,\sigma)=|G|(q+1).
\end{gather*}
By Theorem~\ref{T: upper bound, covers},
applied to each summand $N_{\Curve'}(1,\sigma)$ for $\sigma\ne\tau$,
we obtain for each $\tau\in G$,
$$
N_{\Curve'}(1,\tau)=|G|(q+1)-\sum_{\sigma\ne\tau}N_{\Curve'}(1,\sigma)\geq q-(|G|-1)(2g'+1)\sqrt q+|G|.
$$
Let $H$ be the subgroup of $G$ of covering transformations that act trivially on~$\Curve$.
By the same reasoning as above for~$\P^1$,
we obtain
$$
\sum_{\sigma\in H}N_{\Curve'}(1,\sigma)=|H| N_\Curve(1).
$$
It follows that
$$
N_\Curve(1)\geq q-(|G|-1)(2g'+1)\sqrt q+|G|.
$$
Combined with the upper bound of Theorem~\ref{T: upper bound},
we deduce the Riemann hypothesis for $\Curve$ by Lemma~\ref{L: RH}.
\end{proof}

\section{Comparison with the Riemann Hypothesis}
\label{S: speculations}

The field of functions on a curve is analogous to the field $\Q$ of rational numbers.
However,
since $\Q$ has no field of constants,
there is no analogue of the points on~$\Curve$.
Therefore there is no Frobenius flow as in Section \ref{S:graph of F} on $\spec\Z$,
even though in every Galois extension of $\Q$ there are local Frobenius automorphisms associated with every prime number,
 as in Section \ref{S: point frobenius}.
Moreover,
since the numbers $\log|x|$,
 for $x\in\Q$,
are dense on the real line,
we cannot separate the ``point counting function'' in the different degrees.
Instead,
we have the function $\psi(x)=\sum_{p^k\leq x}\log p$.
The analogue of this function for $\Curve$ is
$$
\psi_\Curve(x)=\sum_{n\leq\log_q x}N_\Curve(n)=\sum_{k\deg v\leq\log_qx}\deg v,
$$
which counts the points on $\Curve$ defined over ${\F_{q^n}}$ with multiplicity $[(\log_qx)/n]$.
To obtain an explicit formula for $\psi_\Curve$,
we use
$
N_\Curve(n)=q^n-\omega_1^n-\dots-\omega_{2g}^n+1
$
to obtain
\begin{gather*}
\psi_\Curve(x)=
\frac{q^{[\log_qx]}-1}{1-q^{-1}}-\sum_{\nu=1}^{2g}\frac{\omega_\nu^{[\log_qx]}-1}{1-\omega_\nu^{-1}}+[\log_qx].
\end{gather*}
Using the Fourier series
$$
q^{-c\{x\}}=(1-q^{-c})\sum_{n\in\Z}\frac{e^{2\pi inx}}{c\log q+2\pi in}\quad\text{and}\quad
\{x\}=\frac12-\sum_{n\neq0}\frac{e^{2\pi inx}}{2\pi in}
$$
we obtain a formula for $\psi_\Curve(x)$ as a sum over the zeros and poles of $\zeta_\Curve(s)$,
\begin{multline*}
\psi_\Curve(x)=\frac{1}{\log q}\Biggl(\sum_{n=-\infty}^\infty\frac{x^{1+2\pi in/\log q}}{1+2\pi in/\log q}
-\sum_{\nu=1}^{2g}\sum_{n=-\infty}^\infty\frac{x^{\rho_\nu+2\pi in/\log q}}{\rho_\nu+2\pi in/\log q}\\
+\sum_{n\neq0}\frac{x^{2\pi in/\log q}}{2\pi in/\log q}+\log x\Biggr)
-\frac12-\frac1{1-q^{-1}}+\sum_{\nu=1}^{2g}\frac1{1-\omega_\nu^{-1}},
\end{multline*}
where $\rho_\nu=\log_q\omega_\nu$.
This formula should be compared with the explicit formula \cite[Thm.\ 29, p. 77]{Ingham},
$$
\psi(x)=x-\sum_\rho\frac{x^\rho}\rho-\frac{\zeta'}\zeta(0)+\sum_{n=1}^\infty\frac{x^{-2n}}{2n},
$$
which expresses $\psi(x)$ as a sum over the zeros and poles of the Riemann zeta function.
Indeed,
$1+2\pi in/\log q$ and $2\pi in/\log q$ run over all poles of $\zeta_\Curve$ (for $n\in\Z$),
and the numbers $\rho_\nu+2\pi in/\log q$ run over all zeros of this function.
Table \ref{T: comparison} compares the Riemann zeta function with the zeta function of $\Curve$.

\begin{table}[htb]
\caption{Comparison of $\spec\Z$ and $\Curve$.}
\label{T: comparison}
\begin{center}\medskip\begin{tabular}{ll}
Rational numbers	&Function fields\\
\hline\\[-9pt]
$\zeta_\Z(s)=\pi^{-s/2}\Gamma(s/2)\sum_{n=1}^\infty n^{-s}$
			&$\zeta_\Curve(s)=q^{s(g-1)}\sum_{\calD\geq0}|\calD|^{-s}$\\[2pt]
$\phantom{\zeta_\Z(s)}=\pi^{-s/2}\Gamma(s/2)\prod_{p}\frac1{1-p^{-s}}$
			&$\phantom{\zeta_\Curve(s)}=q^{s(g-1)}\prod_v\frac1{1-q_v^{-s}}$\\[5pt]
\em Simple poles\\
at $1$, residue $1$	&at $1+k\frac{2\pi i}{\log q}$,	res.\ $\frac h{(q-1)\log q}$\\
at $0$, residue $-1$	&at $k\frac{2\pi i}{\log q}$, res.\ $-\frac h{(q-1)\log q}$\\[5pt]
\em Zeros\\
at $\rho_n=\frac12+i\gamma_n$, $n\in\Z$
			&at $\rho_\nu=\frac12+i\gamma_\nu+k\frac{2\pi i}{\log q}$\\
			&($1\leq\nu\leq2g$, $q^{\rho_\nu}=\omega_\nu$)\\[5pt]
\multicolumn{2}{l}{\em Frobenius as symmetries of a cover}\\
For every extension of $\Q$
			&For every cover of $\Curve$\\[5pt]
\em Frobenius flow\\
See Remark \ref{R: shift}&Acting on points of $\Curve$\\[5pt]
\em Point counting function\\
Unknown			&$N_\Curve(n)=|\Curve(\F_{q^n})|$\\[5pt]
\em Prime counting function\\
$\psi(x)=\sum_{p^k\leq x}\log p$
			&$\psi_\Curve(x)=\sum_{k\deg v\leq\log_qx}\deg v$\\[5pt]
\multicolumn{2}{l}{\em Riemann hypothesis: $\gamma_n$ is real}\\
$\Longleftrightarrow\psi(x)=x+O(x^{1/2+\varepsilon})$
			&$\Longleftrightarrow\psi_\Curve(x)=\frac{1}{1-q^{-1}}q^{[\log_qx]}$\\
			&\hspace{65pt}$+O(x^{1/2+\varepsilon})$\\
			&$\Longleftrightarrow N_\Curve(n)=q^n+O(q^{n/2})$\\
$\Longleftrightarrow\psi(x)\leq x+O(x^{1/2+\varepsilon})\quad$
			&$\:\Longrightarrow\psi_\Curve(x)\leq\frac{1}{1-q^{-1}}q^{[\log_qx]}$\\
			&\hspace{65pt}$+O(x^{1/2+\varepsilon})$\\
			&$\:\Longrightarrow N_\Curve(n)\leq q^n+O(q^{n/2})$
\end{tabular}\end{center}
\end{table}

\begin{remark}\label{R: shift}
As is pointed out in \cite[Remark~c, p.~72]{Connes},
even though the Frobenius flow for the rational numbers is not known,
the dual algebraic picture obtained from class field theory is complete.
Thus the Frobenius flow on $\Curve$ corresponds to the action of $q$ in the idele class group $\A^*/K^*$ on the space of adele classes $\A/K^*$,
 and the Frobenius flow on $\spec\Z$ corresponds to the action of $\R^{>0}\subset\A^*/\Q^*$ on the space~$\A/\Q^*$.
In additive language,
according to Deninger~\cite{cD93,cD93a},
the analogue of the Frobenius flow of the first step may be provided by the shift on the real line.
\end{remark}

By the last entry of Table~\ref{T: comparison},
it is only necessary to prove the upper bound of Theorem \ref{T: upper bound} to obtain the Riemann hypothesis,
and establishing the lower bound of Section \ref{S: point frobenius} becomes unnecessary.\footnote
{This is a consequence of the density of $\log|x|$,
$x\in\Q$,
in the real line.}
Therefore we need the analogue for $\spec\Z$ of the first inequality,
$$
N_\Curve(1)\leq q+O(\sqrt q).
$$
This means that we only need to translate the part of Bombieri's proof that depends on the Frobenius flow on $\Curve$,
not the part that depends on the Frobenius symmetries of covers of $\Curve$.
To translate the argument of Section~\ref{S:graph of F},
we could try to construct a polynomial that vanishes at all prime numbers up to a certain bound.
The infinitesimal generator of the shift on the real line,
which is the counterpart of the Frobenius flow by Remark \ref{R: shift},
 is the derivative operator.
So we may try to construct such a function that vanishes at the prime numbers to a high order.
Then we want to bound the degree of this polynomial.

The ``degree'' of a rational prime number is $\log p$,
so a prime number should correspond to $\log p$ points on some curve.
In Nevanlinna Theory (see,
for example,
\cite{Hayman,LC}),
the counting function of zeros of a meromorphic function $f$ in the disc of radius $r$ is
$$
N_f(0,r)=\sum_{|x|<r\colon f(x)=0}\ord_x(f)\log\frac{r}{|x|},
$$
so one could consider a function such as
$$
f(z)=\prod_{p\leq x}(1-pz)^{[\log_px]}
$$
 on the unit disc.
For this function
$$
N_f(0,1)=\psi(x).
$$

Trying to copy Bombieri's proof,
we could take $s_i=z^{1-i}$ for $i\geq1$ as the basis of functions that have only a pole at `infinity',
and the coefficients are functions on~$\spec\Z$ with a pole at $v_\infty$ alone,
that is,
the coefficients are integers $b_i$.
Thus\footnote
{If this series is infinite,
 it does not converge for $|z|\leq1$,
so we may have to require that it has an analytic continuation.}
$$
f(z)=\sum_{i=1}^\infty b_iz^{1-i}.
$$
Note that the coefficients are determined by $f$,
that is,
the analogue of Lemma \ref{L: f|phi} is automatically satisfied.
We do not know what the analogue of the choice $b_i=a_i^{p^\mu}$ could be.

The main problem is that the arithmetic coordinate cannot be compared with the geometric coordinate:
$z\in\C$ is the geometric coordinate and the coefficients of~$f$ are integers,
i.e.,
functions on the arithmetic coordinate $\spec\Z$.
Hence there is no diagonal and we have to force the vanishing of $f$ at the primes in an artificial manner.
As a consequence,
we do not know how to bound the Nevanlinna height (i.e.,
the degree) of this function.\medskip

It is interesting to pursue this idea a little further.
For $r>0$,
let $\Delta_r$ be the disc of radius $r$ with boundary $\Gamma_r$,
positively oriented.
Let
\begin{gather}\label{E:c}
f(z)=cz^{\ord(f,0)}+\dots
\end{gather}
be a meromorphic function with leading coefficient $c$ in its Laurent series at $0$.
Nevanlinna theory starts with the Poisson--Jensen formula,
which we interpret as a sum over all valuations of the field of meromorphic functions on $\Delta_r$,
\begin{gather}\label{E:PJ}
v_0(f)\log r+\sum_{0<|x|<r}v_{x}(f)\log\frac r{|x|}+\int_{\Gamma_r}v_z(f)\,\frac{dz}{2\pi iz}=-\log|c|,
\end{gather}
where the valuations are $v_z(f)=-\log|f(z)|$ for each $z$ on the boundary of the disc of radius $r$,
and $v_{x}(f)=\ord(f,x)$ for each $x$ inside the disc.\footnote
{Note that $v_0\log r$ is only a valuation for $r\geq1$ and that it is the trivial valuation for $r=1$.}

Note that the sum \eqref{E:PJ} of the valuations is constant,
depending on the function but not on $r$.
This should be compared to the fact that $\sum_vv(\alpha)\deg v=0$ for every nonzero function in the function field of $\Curve$.
It is a mystery why in Nevanlinna theory,
the sum over all valuations does not necessarily vanish.
However,
if we consider the subfield of functions for which the Laurent series has rational coefficients,
then we can write the Poisson--Jensen formula as,
with $c\in\Q$ as in \eqref{E:c},
$$
\sum_{p\neq\infty}v_p(c)\log p+v_0(f)\log r+\sum_{0<|x|<r}v_{x}(f)\log\frac r{|x|}+\int_{\Gamma_r}v_z(f)\,\frac{dz}{2\pi iz}=0,
$$
where the first sum is over all $p$-adic valuations of $\Q$.
It is still puzzling why the archimedean valuation $v_\infty(c)=-\log|c|$ should be excluded from this sum.
We interpret this as meaning that the arithmetic and geometric coordinates (in the sense of Remark \ref{R:arith geo}) meet at $v_{0}$ (geometrically) and $v_\infty$ (arithmetically).
Since this is not a double point,
we only see $v_{0}$.
Note that the archimedean valuations have all been pushed to the boundary of the disc,
and that they are the only nondiscrete valuations.

\begin{remark}
For large $r$,
the archimedean valuations are essentially nonarchimedean,
since for two meromorphic functions $f$ and $g$ that are not a constant multiple of each other,
$|f(z)+g(z)|$ will be close to $\max\{|f(z)|,|g(z)|\}$ on most of the circle~$\Gamma_r$,
and only on small portions of $\Gamma_r$ will $f(z)$ and $g(z)$ be comparable in size.
More specifically,
if the only defects (in the sense of Nevanlinna theory) of $f/g$ are among $\infty$, $0$, $-1$, $e^{2\pi i/3}$ and $e^{4\pi i/3}$,
then the archimedean valuations behave like nonarchimedean valuations for large $r$.
\end{remark}

It seems that by Nevanlinna theory we obtain a connection,
albeit a rather loose one,
between the geometric valuations $v_z$ and $v_{x}$,
and the arithmetic valuations~$v_p$,
and seemingly,
the geometric and arithmetic coordinates intersect at $v_{0}$.
We consider this the point `at infinity',
as in the exposition of Bombieri's proof of the Riemann hypothesis for a curve $\Curve$.
We invite the reader to continue this line of reasoning.

\subsubsection*{Acknowledgements}

We thank Enrico Bombieri for information about the preparation of his Bourbaki seminar talk \cite{Bo},
and Alain Connes for his encouragement.


\begin{thebibliography}{44}

\bibitem{ArtinThesis}	Artin, E.,
{\em Quadratische K\"orper im Gebiete der h\"oheren Kongruenzen I, II,\/} Math.\ Zeitschr.\ \textbf{19} (1924).

\bibitem{Artin}	Artin, E.,
{\em Algebraic numbers and algebraic functions,\/}
American Mathematical Society, Providence, RI, 2000. (Reprinted from the 1967 edition.)

\bibitem{Bo}	Bombieri, E.,
{\em Counting points on curves over finite fields,\/} Seminaire Bourbaki, no.~430 (1973).

\bibitem{Bopersonal}	Bombieri, E.,
personal communication, 2007.

\bibitem{BoP}	Bombieri, E., Hilbert's 8th problem: an analogue,
in: {\em Mathematical developments arising from Hilbert problems,\/}
Proceedings of Symposia in Pure Mathematics vol.~\textbf{XXVIII},
American Mathematical Society, Providence, RI, 1976, pp.~269--274.

\bibitem{Connes}	Connes, A.,
{\em Trace formula in noncommutative geometry and the zeros of the Riemann zeta function,\/}
Sel.~Math., New Ser.~\textbf{5} (1999), 29--106.

\bibitem{Connespersonal}	Connes, A.,
personal communication, 2007.

\bibitem{cD93}	Deninger, C.,
{Lefschetz trace formulas and explicit formulas in analytic number theory,\/}
\textit{J.\ Reine Angew.\ Math.} \textbf{441} (1993), 1--15.

\bibitem{cD93a}	Deninger, C.,
\textit{Evidence for a cohomological approach to analytic number theory,\/}
in: Proc.\ First European Congress of Mathematics (A.\ Joseph \textit{et al.}, eds.), vol.\ I, Paris, July 1992,
Birkh\"auser-Verlag, Basel, 1994, pp.\ 491--510.

\bibitem{Edwards}	Edwards, H.~M.,
\textit{Riemann's Zeta Function,\/} Dover Books, New York, 2001.

\bibitem{Eichler}	Eichler, M.,
\textit{Introduction to the Theory of Algebraic Numbers and Functions,\/} Pure and Applied Mathematics 23, Academic Press, New York, 1966.

\bibitem{indextheory}	Haran, S.,
Index theory, potential theory, and the Riemann hypothesis,
in: {\em L-Functions and Arithmetic} (J.~Coates, M.~J.~Taylor, eds.),
London Mathematical Society Lecture Notes Series, vol.~\textbf{153}, 1989, pp.~257--270.

\bibitem{Haran}	Haran, S.,
On Riemann's zeta function,
in: {\em Dynamical, Spectral, and Arithmetic Zeta Functions} (M.~L.~Lapidus, M.~van Frankenhuijsen, eds.),
Contemporary Mathematics vol.~\textbf{290}, American Mathematical Society, R.I., 2001, pp.~93--112.

\bibitem{mystery}	Haran, S.,
{\em The Mysteries of the Real Prime,\/} London Mathematical Society monographs, New Series, vol.~25, Clarendon Press, Oxford, 2001.

\bibitem{Hasse34}	Hasse, H.,
{\em Ueber Kongruenzzetafunktionen,\/} S.\ Ber.\ Preu\ss.\ Ak.\ Wiss.\ 1934, p.~250.

\bibitem{Hassebook}	Hasse, H.,
{\em Zahlentheorie,\/} Berlin, 1949.

\bibitem{Hayman}	Hayman, W.\ K.,
{\em Meromorphic Functions,\/}
Oxford University Press, London, 1975.

\bibitem{Roe}	Higson, N., and Roe, J.,
\textit{Surveys in Noncommutative Geometry,\/}
Clay Mathematics Proceedings vol.~6, American Mathematical Society, Providence, RI, 2006.

\bibitem{Ingham}	Ingham, A. E.,
{\em The distribution of prime numbers,\/}
Cambridge University Press, 1932.

\bibitem{Iwasawa}	Iwasawa, K.,
{\em On the rings of valuation vectors,\/}
Annals of Math., 2nd Ser., vol.~57 (1953), 331--356.

\bibitem{LC}	Lang, S., and Cherry, W.,
\textit{Topics in Nevanlinna Theory,\/}
Lecture Notes in Mathematics \textbf{1433}, Springer-Verlag, 1990.

\bibitem{book}	Lapidus, M.\ L., and van Frankenhuijsen, M.,
\textit{Fractal Geometry and Number Theory} (\textit{Complex dimensions of fractal strings and zeros of zeta functions}),
Birkh\"auser, Boston, 2000.

\bibitem{second}	Lapidus, M.\ L., and van Frankenhuijsen, M.,
\textit{Fractal Geometry, Complex Dimensions and Zeta Functions: Geometry and Spectra of Fractal Strings,}
Springer Monographs in Mathematics, 2006.

\bibitem{Patterson}	Patterson, S.\ J.,
\textit{An Introduction to the Theory of the Riemann Zeta-Function,}
Cambridge Studies in Advanced Mathematics \textbf{14}, Cambridge University Press, 1988.

\bibitem{Pell}	Pellikaan, R.,
On special divisors and the two variable zeta function of algebraic curves over	finite fields, 
in: \textit{Arithmetic, Geometry and Coding Theory,}
Proceedings of the International Conference held at CIRM, Luminy, France, 1993, pp.\ 175--184.

\bibitem{Riemann1859}	Riemann, B.,
{\em Ueber die Anzahl der Primzahlen unter einer gegebenen Gr\"osse,\/}
in~\cite[p.~145]{Riemann}, translated in~\cite[p.~299]{Edwards} and \cite{RiemannT}.

\bibitem{Riemann}	Riemann, B.,
{\em Gesammelte Werke,\/}
Teubner, Leipzig, 1892 (reprinted by Dover Books, New York, 1953, translated in \cite{RiemannT}).

\bibitem{RiemannT}	Riemann, B.,
{\em Collected Papers\/}
(translated by R.\ Baker, C.\ Christenson and H.\ Orde), Kendrick Press, 2004.

\bibitem{Roquette}	Roquette, P.,
{\em Arithmetischer Beweis der Riemannschen Vermutung in Kongruenzfunktionenk\"orpern beliebigen Geslechts,\/}
J.\ reine und angew.\ Math.\ \textbf{191} (1953), pp.~199--252.

\bibitem{Rosen}	Rosen, M.,
{\em Number Theory in Function Fields,\/} Graduate Texts in Mathematics \textbf{210}, Springer-Verlag, 2002.

\bibitem{fSchmidt}	Schmidt, F.~K.,
{\em Analytische Zahlentheorie in K\"orpern der Characteristik $p$,\/}
Math.\ Zeitschr.\ \textbf{33} (1931).

\bibitem{wSchmidt}	Schmidt, W.~M.,
{\em Equations over Finite Fields: An Elementary Approach,\/}
Lecture Notes in Mathematics \textbf{536}, Springer-Verlag, 1976.

\bibitem{Ste}	Stepanov, S.\ A.,
{\em On the number of points of a hyperelliptic curve over a finite prime field,\/}
Izv.\ Akad.\ Nauk SSSR, Ser.\ Math. \textbf{33} (1969), pp. 1103--1114.

\bibitem{Stichtenoth}	Stichtenoth, H.,
{\em Algebraic Function Fields and Codes,\/} Springer-Verlag, Berlin, 1993.

\bibitem{Tate}	Tate, J.,
Fourier Analysis in Number Fields and Hecke's Zeta-Functions,
in: \textit{Algebraic Number Theory} (J.\ W.\ S.~Cassels, A.~Fr\"ohlich, eds.), Academic Press, New York, 1967, pp.~305--347.

\bibitem{Tretkoff}	Tretkoff, P.,
\textit{The Riemann Hypothesis: Arithmetic and Geometry,\/}
in \cite[pp.~143--189]{Roe}.

\bibitem{zeros}	van Frankenhuijsen, M.,
{\em Arithmetic Progressions of Zeros of the Riemann Zeta Function,\/}
J.\ of Number Theory \textbf{115}, 2005, pp.~360-370.

\bibitem{Expositiones}	van Frankenhuijsen, M.,
\textit{The two-variable zeta function and the Riemann Hypothesis for function fields,}
to appear in Expositiones Mathematicae, 2007.

\bibitem{aW40}	Weil, A.,
\textit{Sur les fonctions alg\'ebriques \`a corps de constantes fini,}
{Comptes rendus des s\'eances de l'Acad\'emie des Sciences} \textbf{210} (1940), 592--594; reprinted in \cite[vol.~I, pp.~257--259]{aW80}.

\bibitem{aW41}	Weil, A.,
\textit{On the Riemann hypothesis in function-fields,}
{Proc.\ Nat.\ Acad.\ Sci.\ U.S.A.} \textbf{27} (1941), 345--347; reprinted in \cite[vol.~I, pp.~277--279]{aW80}.

\bibitem{letter}	Weil, A.,
\textit{Letter to Artin,} July 10, 1942; reprinted in \cite[vol.~I, pp.~280--298]{aW80}.

\bibitem{Foundations}	Weil, A.,
\textit{Foundations of algebraic geometry,}
{Ann.\ Math.\ Soc.\ Coll.,} vol.~XXIX, New York 1946; second edition 1962.

\bibitem{aW49}	Weil, A.,
\textit{Number of solutions of equations in finite fields,} {Bull.\ Amer.\ Math.\ Soc.} \textbf{55} (1949), 497--508; reprinted in \cite[vol.~I, pp.~399--410]{aW80}.

\bibitem{aW71}	Weil, A.,
\textit{Courbes alg\'ebriques et vari\'et\'es ab\'eliennes}, Hermann, Paris, 1971.
(Combines in one volume \textit{Sur les courbes alg\'ebriques et les vari\'et\'es qui s'en d\'e\-dui\-sent,}
Pub.\ Inst.\ Math.\ Strasbourg VII (1945), pp.\ 1--85,
and
{\em Vari\'et\'es Ab\'eliennes et courbes alg\'ebriques,\/}
Actualit\'es scientifiques et industrielles \textbf{1041}, Hermann, Paris, 1948.)

\bibitem{Weil}	Weil, A.,
\textit{Basic Number Theory,\/} Springer Classics in Mathematics, 1995.

\bibitem{aW80}	Weil, A.,
\textit{Andr\'e Weil: Oeuvres Scientifiques\/} (Collected Papers), vols.\ I, II and III,
2nd ed., Springer-Verlag, Berlin and New York, 1980.

\end{thebibliography}
\end{document}